\documentclass[a4paper, 10pt]{article}

\usepackage[cp1251]{inputenc}
\usepackage[english]{babel}
\usepackage{amsthm}
\usepackage{cite}
\usepackage{tikz-cd}
\usepackage{amsmath}
\usepackage{amsfonts}
\usepackage{amssymb}
\usepackage{hyperref}
\usepackage[]{authblk}
\usepackage{epsfig,graphicx}
\usepackage{indentfirst}

\topmargin 0.0cm
\headsep=0.0cm
\headheight=0.0cm
\textheight 22.5cm
\oddsidemargin 0.0cm
\evensidemargin 0.0cm
\textwidth 15.0cm

\DeclareMathOperator{\Aut}{Aut}

\DeclareMathOperator{\End}{End}
\DeclareMathOperator{\Char}{char}

\DeclareMathOperator{\Fr}{Fr}
\DeclareMathOperator{\ad}{ad}
\DeclareMathOperator{\Deg}{deg}
\DeclareMathOperator{\Der}{Der}

\DeclareMathOperator{\Id}{Id}
\DeclareMathOperator{\Ht}{ht}

\DeclareMathOperator{\GL}{GL}
\newcommand{\TAut}{\operatorname{TAut}}
\DeclareMathOperator{\Ind}{Ind}

\newtheorem{thm}{Theorem}[section]
\newtheorem{lem}[thm]{Lemma}

\newtheorem{prop}[thm]{Proposition}
\newtheorem{cor}[thm]{Corollary}
\newtheorem{conj}[thm]{Conjecture}
\newtheorem{Def}[thm]{Definition}
\newtheorem{remark}[thm]{Remark}

\begin{document}
\fontsize{11}{11pt}\selectfont
\renewcommand{\thefootnote}{\fnsymbol{footnote}}
\footnotetext{\emph{2010 Mathematics Subject Classification:} 14R10} \footnotetext{\emph{Key
words:} Weyl algebra automorphisms, polynomial symplectomorphisms, deformation quantization,
infinite prime number.}
\renewcommand{\thefootnote}{\arabic{footnote}}
\fontsize{12}{12pt}\selectfont
\title{\bf Independence of the B-KK Isomorphism of \\
Infinite Prime}
\renewcommand\Affilfont{\itshape\small}

\author[1]{Alexei Belov-Kanel\thanks{kanel@mccme.ru}}
\author[2,3]{Andrey Elishev\thanks{elishev@phystech.edu}}
\author[1]{Jie-Tai Yu\thanks{jietaiyu@szu.edu.cn}}

\affil[1]{College of Mathematics and Statistics, Shenzhen University, Shenzhen, 518061, China}
\affil[2]{Laboratory of Advanced Combinatorics and Network Applications, Moscow Institute of
Physics and Technology, Dolgoprudny, Moscow Region, 141700, Russia}
\affil[3]{Department of Innovations and High Technology, Moscow Institute of Physics and
Technology, Dolgoprudny, Moscow Region, 141700, Russia}

\date{}

\maketitle
\renewcommand{\abstractname}{Abstract}
\begin{abstract}
We investigate a certain class of $\Ind$-scheme morphisms corresponding to homomorphisms between
the automorphism group of the $n$-th complex Weyl algebra and the group of Poisson
structure-preserving automorphisms of the commutative complex polynomial algebra in $2n$
variables. A conjecture of Kanel-Belov and Kontsevich, whose proof we have recently obtained \cite{K-BE4},
states that these automorphism groups are canonically isomorphic in characteristic zero, with the
mapping discussed here being the candidate for the isomorphism. The main objective of the present
paper is to establish the independence of the said mapping of the choice of infinite prime - that
is, the class $[p]$ of prime number sequences modulo fixed non-principal ultrafilter
$\mathcal{U}$ on the index set of positive integers. To that end, we introduce the augmented and
skew augmented versions of algebras in question and study the augmented $\Ind$-morphism between
the normalized automorphism $\Ind$-schemes in the context of tame automorphism approximation. In
order to correctly implement approximation in our proof, we study singularities of curves in skew
augmented automorphism $\Ind$-schemes and their images under Ind-scheme morphisms. Apart from
that, we study the augmented version of the independence conjecture.
\end{abstract}
\section{Introduction}
\subsection{Overview}
The celebrated Jacobian conjecture $JC_n$ states that every polynomial endomorphism
$$
\varphi: \mathbb{K}[x_1,\ldots, x_n]\rightarrow\mathbb{K}[x_1,\ldots, x_n]
$$
($\mathbb{K}$ has characteristic zero) with constant non-zero Jacobian is in fact an automorphism.
It is a hard open problem which, since it was posed by Keller in 1939 \cite{Keller}, has stimulated
a great deal of progress in areas of algebraic geometry, ring theory, and general algebra
\cite{Yag1, Yag2, Di, DiLev, BBRY, Umir1}.

\smallskip

One of the major developments concerning the Jacobian conjecture was provided by a connection with
a classical problem of J. Dixmier \cite{1}. The Dixmier conjecture is formulated as follows. Let
$W_{n,\mathbb{C}}$ denote the $n$-th Weyl algebra over the field of complex numbers
\begin{equation*}
W_{n,\mathbb{C}}=\mathbb{C}\langle x_1,\ldots,x_n,y_1,\ldots,y_n\rangle/(x_ix_j-x_jx_i,\; y_iy_j-y_jy_i,\; y_ix_j-x_jy_i-\delta_{ij}),
\end{equation*}
($\delta_{ij}$ is the Kronecker symbol). The Dixmier conjecture ($DC_n$) states:
\begin{conj}[Dixmier conjecture] \label{dc}
Every algebra endomorphism of $W_{n,\mathbb{C}}$ is invertible.
\end{conj}
It has been known (see, for instance, \cite{Bass} or \cite{3}) that $DC_n$ implies $JC_n$.
Tsuchimoto \cite{13},  and independently Kanel-Belov and Kontsevich \cite{3}, have discovered the
following converse implication.
\begin{thm}[Tsuchimoto \cite{13}, Kanel-Belov and Kontsevich \cite{3}]\label{dcjc}
$JC_{2n}$ implies $DC_n$.
\end{thm}
From this theorem it immediately follows that the conjunctions $JC_{\infty}$ and $DC_{\infty}$ are
equivalent -- or, as is usually said, that the Jacobian conjecture is stably equivalent to the
Dixmier conjecture.

\smallskip

The cornerstone of the proof of Theorem \ref{dcjc} (both in \cite{3} and \cite{13}) is the
construction of a semigroup homomorphism
$$
\phi_{[p]}: \End W_{n,\mathbb{C}}\rightarrow \End P_{n,\mathbb{C}},
$$
where $P_{n,\mathbb{C}}$ denotes the so-called {\it commutative Poisson algebra}, which is the
algebra
$$
\mathbb{C}[x_1,\ldots, x_n,p_1,\ldots, p_n]
$$
of commutative polynomials in $2n$ variables equipped with the additional Poisson bracket
$$
\lbrace x_i,x_j\rbrace = 0,\;\;\lbrace p_i,p_j\rbrace=0,\;\;\lbrace p_i,x_j\rbrace=\delta_{ij}.
$$
The key property of the homomorphism $\phi_{[p]}$ is that it restricts to a group homomorphism
$$
\phi_{[p]}: \Aut W_{n,\mathbb{C}}\rightarrow \Aut P_{n,\mathbb{C}}
$$
between the automorphism groups of the Weyl and Poisson algebras (i.e. the images of automorphisms
under $\phi_{[p]}$ are automorphisms). The consequence of this fact is that a counterexample to the
Dixmier conjecture $DC_n$ must be mapped by $\phi_{[p]}$ to a polynomial endomorphism (in $2n$
variables) with constant non-zero Jacobian which is not an automorphism.

\smallskip

The nature of the homomorphism $\phi_{[p]}$, constructed both in \cite{13} and \cite{4,3} as well
as in the main part of this paper, hints at similarities between the automorphism groups $\Aut
W_{n,\mathbb{C}}$ and $\Aut P_{n,\mathbb{C}}$. In connection with this circumstance, the following
conjecture, due to Kanel-Belov and Kontsevich (often dubbed here and elsewhere the Kontsevich
conjecture), was formulated and studied \cite{4}.
\begin{conj}[Kontsevich conjecture]\label{mainconjgen}
The automorphism groups
$$
\Aut W_{n,\mathbb{K}}\;\;\text{and}\;\;\Aut P_{n,\mathbb{K}}
$$
are isomorphic in characteristic zero.
\end{conj}

In particular, in view of the connection to the Jacobian and Dixmier conjectures, this conjecture
can be slightly modified as follows.
\begin{conj}\label{mainconj}
If $\mathbb{K}=\mathbb{C}$, then
$$
\phi_{[p]}: \Aut W_{n,\mathbb{C}}\rightarrow \Aut P_{n,\mathbb{C}}
$$
is an isomorphism.
\end{conj}

\smallskip

The construction of the homomorphism $\phi_{[p]}$ is quite elaborate and involves a fixed
non-principal ultrafilter on the index set as well as the so-called \emph{infinite prime numbers},
denoted by $[p]$ -- elements of the ring of hyperintegers (constructed with respect to the fixed
ultrafilter). In the form given in Tsuchimoto's paper \cite{13}, it also requires the ground field
to be algebraically closed, which, in accordance with the Lefschetz principle, allows one to set
$\mathbb{K}=\mathbb{C}$.

\smallskip

In \cite{4}, several generalizations of Conjecture \ref{mainconjgen} were studied. The most general
reformulation has to do with holonomic modules over the Weyl algebra (holonomic
$\mathcal{D}$-modules) and is stated as follows.
\begin{conj}\label{conjhol}
There is a one-to-one correspondence between irreducible holonomic $\mathcal{D}$-modules over $W_n$
and lagrangian subvarieties of the affine space (of corresponding dimension).
\end{conj}

One direction in the Conjecture \ref{conjhol} -- namely the construction of a lagrangian subvariety
from a given holonomic module -- has been accomplished by Bitoun \cite{Bit} and, independently, Van
den Bergh \cite{VdB}, who gave a conceptually different proof. Also, the one-dimensional case of
Conjecture \ref{conjhol} was studied in \cite{K-BE}.


Dodd \cite{Dodd} has established a number of far-reaching results of homological nature which, as
far as our understanding is, imply a version of Conjecture \ref{conjhol} (cf. Theorem 1, Corollary
2 and Theorem 3 of \cite{Dodd}). His argument is based on properties of the so-called $p$-support,
defined by Kontsevich in \cite{Kon2}.

\smallskip

Conjecture \ref{mainconjgen} is positive for $n=1$. This is a classical result due to Jung
\cite{Jung} (cf. also \cite{VdK}) and Makar-Limanov \cite{8,9}, whose proof is essentially a
description of the respective automorphism groups. We have recently put forward a proof of
Conjecture \ref{mainconj}, \cite{K-BE4}, which (unlike that of Dodd, which uses ideas of
homological nature) relies on a certain deformation (or augmentation) of the Poisson structure as
well as on approximation by tame symplectomorphisms (the meaning of these terms is explained later
in the introduction as well as in the main text).

\smallskip

In \cite{K-BE4}, it is demonstrated that the group homomorphism $\phi_{[p]}$ is one to one (for any
arbitrary fixed infinite prime $[p]$). Once this is done, the most pertinent question becomes
whether the dependence of this isomorphism on the infinite prime $[p]$ can be eliminated. In other
words, the closest companion to the main Conjecture \ref{mainconj} is the following statement:
\begin{conj}\label{mainconjindep}
The homomorphism $\phi_{[p]}$ \footnote{The Kanel-Belov -- Kontsevich homomorphism, sometimes
dubbed B-KK homomorphism.} is independent of the choice of the infinite prime $[p]$.
\end{conj}



This paper focuses on Conjecture \ref{mainconjindep}.  Namely, we will prove, \emph{assuming
Conjecture} \ref{mainconj} (which is justified in \cite{K-BE4}), that for any two distinct infinite
primes $[p]$ and $[p']$ the composition
$$
\phi_{[p]}\circ\phi_{[p']}^{-1}:\Aut(P_{n,\mathbb{C}})\rightarrow \Aut(P_{n,\mathbb{C}})
$$
is the identity map.

\smallskip

The so-called \emph{loop morphism} $\phi_{[p]}\circ\phi_{[p']}^{-1}$, together with its obvious
counterpart for the Weyl algebra, provides an example of an $\Ind$-morphism of $\Ind$-schemes. The
meaning of this is as follows.

Denote, for the sake of brevity, by $\zeta_i,\;i=1,\ldots,2n$ the standard generators $x_j,\;d_i$
of the Weyl algebra. If these carry degree one, then there is the obvious filtration of $W_n$ by
the total degree. This filtration induces a filtration on the automorphism group:
\begin{equation*}
\Aut^{\leq N} W_{n,\mathbb{C}}:=\lbrace f\in\Aut(W_{n,\mathbb{C}})\;|\;\Deg f(\zeta_i)\leq N,\forall i=1,\ldots,2n\rbrace.
\end{equation*}

Analogous sets of the form $\Aut^{\leq N}$ are defined for the symplectomorphism group
$\Aut(P_{n,\mathbb{C}})$. The sets $\Aut^{\leq N}$ are in fact affine algebraic sets: indeed, any
algebra automorphism is the same as the collection of images of algebra generators with respect to
that automorphism; those in turn are linear combinations of monomials which constitute the standard
basis. The coefficients in these linear combinations are constrained by polynomial equations (which
ensure invertibility together with the preservation of the relevant algebraic structures, such as
the commutator or the Poisson bracket) and therefore can be thought of as coordinate functions for
an algebraic set in the affine space of suitable dimension.

\smallskip

Whether these algebraic sets are in fact varieties (i.e. whether they are Zariski-irreducible) is
currently unknown, however at any rate to these sets correspond certain schemes. We take
normalizations of these schemes and denote them also by $\Aut^{\leq N}$\footnote{The normalization
is needed for Proposition \ref{propgabber} and therefore for the main proof to work.}. In spite of
minor abuse of notation, we will denote the algebraic sets together with their normalizations and
schemes corresponding to those all by $\Aut^{\leq N}$.

\smallskip

The maps
\begin{equation*}
\Aut^{\leq N} W_{n,\mathbb{C}}\rightarrow \Aut^{\leq N+1} W_{n,\mathbb{C}}
\end{equation*}
(defined in an obvious manner for the algebraic sets) are Zariski-closed embeddings, and the entire
group $\Aut (W_{n,\mathbb{C}})$ is a direct limit of the inductive system formed by $\Aut^{\leq N}$
together with these maps. The same can be said for the symplectomorphism group
$\Aut(P_{n,\mathbb{C}})$.

The scheme version of this direct system of algebraic sets and closed embeddings, together with its
direct limit, constitutes what is known as an $\Ind$-\emph{scheme}. The origin of the concept, as
well as the relevant definitions, is due to Shafarevich \cite{Shafarevich}. An $\Ind$-morphism of
$\Ind$-schemes (or $\Ind$-scheme morphism) is a continuous\footnote{The base spaces of direct
limits are endowed with the direct limit topology, cf. \cite{Shafarevich}.} mapping of
$\Ind$-schemes which, for every object in the domain direct system, restricts to a morphism between
that object and a corresponding object of the codomain direct system. In this sense, the "loop
morphisms" of this paper are $\Ind$-scheme morphisms.

\smallskip

Of course, the same definitions could be repeated for the endomorphism semigroups $\End$, which
means that the latter correspond to $\Ind$-schemes (also denoted by $\End$) as well.

\smallskip

It is therefore sensible, in the context of Conjecture \ref{mainconj}, to study the geometry of
$\Ind$-schemes corresponding to groups of automorphisms of the relevant algebras; in particular,
the properties of automorphisms of such $\Ind$-schemes are of main interest. This subject has its
origins in the classical work of B. I. Plotkin \cite{BIP1, BIP2} (that study was conducted in the
realm of general algebra and therefore was largely unrelated to the relatively modern particulars
discussed here).

\smallskip

The description of $\Ind$-automorphisms of $\Ind$-schemes -- that is, the study of spaces of the
form $\Aut\Aut$ and $\Aut\End$ -- has been done in various instances. In the case of $\End$, it
turned out to be possible \cite{KBL, KBLBerz, Berz} to describe the entire groups $\Aut\End$ (i.e.
not just the subgroup of mappings preserving the $\Ind$-scheme structure) for the cases of free
associative and commutative polynomial algebras. On the other hand, for the same algebras the
spaces $\Aut_{\Ind}\Aut$ for the same algebras were systematically studied in \cite{KBYu}. In that
study, tame approximation as well as a certain singularity technique (explained and utilized in
this paper as well) has been shown to be rather helpful.

\smallskip

In the case of the Poisson algebra $P_n$, we have not been able to obtain the description of
$\Aut_{\Ind}\Aut$ by means of tame approximation, as in the easier case of commutative polynomial
algebra in \cite{KBYu}. Certain topological properties are missing. One way around this is the
introduction of deformation (or augmentation) of the algebra and its Poisson structure by a central
variable $h$, so that
$$
\lbrace p_i, x_j\rbrace = h\delta_{ij}
$$
in the new algebra. This augmentation modifies (homogenizes) the Poisson structure to match the
formal power series topology which defines tame approximation. Furthermore, the Poisson structure
can be further distorted by allowing non-zero commutation of distinct generators, in such a way as
to make the morphisms in question continuous. This crucial idea is what enables us to resolve the
symplectomorphism lifting problem in \cite{K-BE4} which leads to the proof of Conjecture
\ref{mainconj}. The tradeoff of the augmentation approach is the need to \emph{specialize} the new
variable $h$ (to $h = 1$) in order to return to the original conjecture for non-augmented algebras.
This is achieved, both here and in \cite{K-BE4}, by means of a certain homotopy argument. The
procedure is {\bf not at all trivial } and requires, for its last step to work, the invertibility
of mappings corresponding to points in $\Aut$ (that is, it does not work with endomorphisms).

\smallskip

In order to obtain the stronger topological properties of tame approximation (continuity of the
loop morphism in the skew augmented case), we use a certain \textbf{singularity trick}. In the broadest
terms, it is a proof technique that allows one, by examining orders of singularities of certain
curves and their images under the studied morphism, to obtain useful data on the morphism. The
technique was utilized in \cite{KBYu} as well as in the main text. It is not applicable in the
non-augmented case.

\smallskip

The study of loop morphisms as $\Ind$-automorphisms is a meaningful effort in its own right, given
its conceptual connection with the methods of \cite{KBYu}. As it turns out, the loop morphism
$\phi_{[p]}\circ\phi_{[p']}^{-1}$ belongs to a certain class of asymptotic expansion preserving
morphisms, for which local (strata-wise) unipotency can be established. This result, significant
primarily in the context of the study \cite{KBYu}, will be given and proved in the accompanying
paper \cite{K-BEadd}. The description of $\Aut_{\Ind}\Aut P_n$ along the lines of \cite{KBYu} is
currently out of reach; nevertheless, something regarding the space  $\Aut_{\Ind}\Aut P_n$ can be
obtained by means of straightforward geometric considerations.

\smallskip



The construction of the morphism $\phi_{[p]}$, as well as the notion of infinite prime in the
context of algebraic geometry, has its roots in model theory. Application of model theory to
problems in algebraic geometry has been developed by E. Hrushovski and B. Zilber, cf. for instance
\cite{BH} and \cite{HZ}.

The study of $\Ind$-schemes using tools from model theory is also our objective, with Conjecture
\ref{mainconj} being one of the more significant examples. It is our hope that the investigation of
such instances will yield new insights into the theory itself.

\subsection{Main results}
The main results of the present paper are as follows. First and foremost, we give a proof of
Conjecture \ref{mainconjindep}, in the form of the following Main Theorem.

\begin{thm}[Main Theorem] \label{maintheorem1}
Assuming Conjecture \ref{mainconj}, for any two infinite primes $[p],[p']$ the loop morphism
$$
\Phi = \phi_{[p]}\circ\phi_{[p']}^{-1}
$$
and its symplectic counterpart $\Phi_s$ is the identity map.
\end{thm}
Evidently, it does not matter much which one of the loop morphisms -- $\Phi$ or $\Phi_s$ -- is to
be proved trivial: once the fact is established for any one of them, the statement for the other
one follows at once. We will therefore often switch from one type of morphism to the other for the
sake of local convenience. Also, the statement of the Main Theorem is trivial for $[p] = [p']$,
therefore we will assume $[p]\neq [p']$ in our construction of $\Phi$.

\smallskip

The proof of the Main Theorem relies on the properties of the so-called augmented (or deformed)
versions of the Weyl and Poisson algebras, which are introduced in the next subsection, as well as
utilizes augmented tame automorphism approximation, as in Section 4.

\smallskip

The key step in the establishment of the Main Theorem is the proof that the augmented version of
the loop morphism is the identity map. Namely, we have the following.
\begin{thm}[Main Theorem 2]\label{maintheorem4}
The morphism
$$
\Phi^h_s: \Aut P^h_{n,\mathbb{C}}\rightarrow \Aut P^h_{n,\mathbb{C}}
$$
given by the composition $(\phi^h)_{[p]}\circ(\phi^h)_{[p']}^{-1}$, for any fixed pair $[p],[p']$,
is the identity map.
\end{thm}
Again, we are obviously interested in the non-trivial case $[p]\neq [p']$.

The statement in the main text which corresponds to this theorem is Proposition \ref{mainprop}
(formulated for the augmented Weyl algebra, which is not significantly different from the statement
of the Main Theorem 2 as we have remarked before). We note that Main Theorem 2 is the augmented
analogue of the independence of infinite prime conjecture (Conjecture \ref{mainconjindep} and the
Main Theorem). As formulated above, it requires the assumption of the augmented version of
Conjecture \ref{mainconj}. The justification of this assumption is provided in the proof of
Conjecture \ref{mainconj} in our paper \cite{K-BE4}.

\smallskip

We conclude the overview of the main results with the following observation. Most of our analysis
takes place over algebraically closed base field -- in particular, Tsuchimoto's construction of the
homomorphism $\phi_{[p]}$ leads to an algebraically closed "universal" base field of characteristic
zero. However, in \cite{4}, a slightly different, more general construction of a group homomorphism
over the rationals is presented -- namely, it is proved in \cite{4} that for $R$ a commutative ring
there exists a (unique) group homomorphism
$$
\phi_R: \Aut W_{n,R}\rightarrow \Aut P_{n,R_{\infty}}
$$
where
\begin{equation*}
R_{\infty}=\lim_{\rightarrow}\left( \prod_{p} R'\otimes \mathbb{Z}/p\mathbb{Z}\;/\;\bigoplus_{p} R'\otimes \mathbb{Z}/p\mathbb{Z}\right),
\end{equation*}
is the reduction modulo infinite prime. The image  $\phi_R(f)$ is essentially (untwisted by the
Frobenius morphism) a collection of restrictions to centers of Weyl algebras over $\mod\;p$
reductions of a finitely generated subring $R'\subset R$ over which the automorphism $f$ is
defined. This construction is supposed to be the replacement of $\phi_{[p]}$ for the case of
arbitrary base field of characteristic zero (or equivalently the field $\mathbb{Q}$ of rational
numbers). In light of that, one has the following statement (Conjecture 3 of \cite{4}):
\begin{conj}\label{conj3}
The image of $\phi_R$ belongs to
$$
\Aut P_{n,i(R)\otimes \mathbb{Q}}
$$
where $i:R\rightarrow R_{\infty}$ denotes the tautological inclusion.
\end{conj}
From that Conjecture a collection of constructible maps
$$
\phi_{n,N}: \Aut^{\leq N}W_{n,\mathbb{Q}}\rightarrow \Aut^{\leq N}P_{n,\mathbb{Q}}
$$
may be defined, as proved in \cite{4}. These maps serve as generalizations of $\phi_{[p]}$ (dealt
with in this paper) and constitute the conjectured canonical isomorphism of Conjecture
\ref{mainconjgen}. What this means for the ultrafilter construction considered here is that \emph{a
priori} it is not known whether $\phi_{[p]}$ is defined over the rationals. However -- and this is
the point of our observation -- the introduction of augmentation and augmented tame approximation
should allow us to circumvent this obstacle. The line of reasoning is as follows.

\smallskip

Firstly, the isomorphism of tame subgroups (Theorem 1 of \cite{4}) continues to be valid for the
base field $\mathbb{Q}$. Secondly, both in augmented and non-augmented cases, tame approximation
requires only characteristic zero to work, and therefore is valid over the rationals. In the
augmented case, however, tame approximation is stronger in the sense that the tame isomorphism is
continuous in a neighborhood of the identity map. Therefore, for Weyl $\mathbb{Q}$-algebra
automorphisms sufficiently close to the identity one can take a well defined limit of the image
under the tame isomorphism of an arbitrary converging tame sequence. Thus defined mapping can be
extended to the whole space (in a manner similar to that of Section 4) to yield a mapping which
must coincide with the augmented version of $\phi_R$ (for $R=\mathbb{Q}$). One then specializes
(again as in Section 4) the augmentation parameters to return to the non-augmented case. Most of
the mechanics of Section 4, as well as the proof of Conjecture \ref{mainconj} in \cite{K-BE4} is
adapted to this situation. Thus one should be able to obtain the proof of Conjecture
\ref{mainconjgen} by augmented tame approximation and specialization, together with its canonicity
(since the tame isomorphism is in fact independent of infinite prime). In other words, we must have
the following

\begin{thm}[Main Theorem 3]\label{mainthmgen}
The groups $\Aut W_{n,\mathbb{Q}}$ and $\Aut P_{n,\mathbb{Q}}$ are canonically isomorphic.
\end{thm}

\smallskip

Therefore, the suitable (rather straightforward) adaptation of the augmented tame approximation
allows one to obtain the proof of the general form of Kontsevich conjecture on the isomorphism --
Conjecture \ref{mainconjgen}.

\smallskip

It is also easy to see that the construction above, together with the proof in Section 4 (as well
as the proof of the isomorphism conjecture in \cite{K-BE4}) applies to the case of \emph{any} base
field of characteristic zero. Therefore, the Main Theorems 1 and 2 admit the following
generalization.

\begin{thm}[Main Theorem 4]\label{mainthmgen}
Let $\mathbb{K}$ be a field of characteristic zero. Then the groups $\Aut W_{n,\mathbb{K}}$ and
$\Aut P_{n,\mathbb{K}}$ are isomorphic, with the isomorphism being provided by any morphism
$\phi_{[p]}$ as constructed above.
\end{thm}

\begin{thm}[Main Theorem 5] \label{maintheorem1a}
For any two infinite primes $[p],[p']$ and arbitrary base field $\mathbb{K}$, $\Char\mathbb{K} =
0$, the morphisms
$$
\phi_{[p]},\phi_{[p']}:\Aut W_{n,\mathbb{K}}\rightarrow \Aut P_{n,\mathbb{K}}
$$
(defined as above) coincide.
\end{thm}

\begin{thm}[Main Theorem 6]\label{maintheorem4}
For any two infinite primes $[p],[p']$ and any base field $\mathbb{K}$, $\Char\mathbb{K} = 0$, the
morphisms
$$
\phi^h_{[p]},\phi^h_{[p']}:\Aut W^h_{n,\mathbb{K}}\rightarrow \Aut P^h_{n,\mathbb{K}}
$$
of the augmented algebras coincide.
\end{thm}

\smallskip

While these considerations are reasonable, the detailed exposition of proofs lies beyond the scope
of the present paper. We will focus on these problems in our further work.

\subsection{Definitions and preliminaries}

\subsubsection{Tame and wild automorphisms}
The proof of the Main Theorem, as well as the strengthened version of Conjecture \ref{mainconj}
(cf. our newest paper \cite{K-BE4}), relies on the phenomenon of approximation by tame
automorphisms. The definition is as follows.

Suppose first that $\mathbb{K}[x_1,\ldots, x_n]$ is the polynomial algebra over a field
$\mathbb{K}$, and let $\varphi$ be an automorphism of this algebra.
\begin{Def} \label{defelement} $\varphi$ is an \textbf{elementary} automorphism if it is of the form
\begin{equation*}
\varphi = (x_1,\ldots,\;x_{k-1},\;ax_k+f(x_1,\ldots,x_{k-1},\;x_{k+1},\;\ldots,\;x_n),\;x_{k+1},\;\ldots,\;x_n)
\end{equation*}
with $a\in\mathbb{K}^{\times}$.
\end{Def}
Observe that linear invertible changes of variables -- that is, transformations of the form
\begin{equation*}
(x_1,\;\ldots,\;x_n)\mapsto (x_1,\;\ldots,\;x_n)A,\;\;A\in\GL(n,\mathbb{K})
\end{equation*}
are realized as compositions of elementary automorphisms.
\smallskip
\begin{Def} \label{deftame}
A \textbf{tame} automorphism is an element of the subgroup \\
$\TAut \mathbb{K}[x_1,\ldots,x_n]$ generated by all elementary automorphisms. Automorphisms that
are not tame are called \textbf{wild}.
\end{Def}

All automorphisms of $\mathbb{K}[x,y]$ are tame \cite{Jung, VdK}; there are examples of wild
automorphisms of $\mathbb{K}[x,y,z]$ (Nagata automorphism). It is unknown whether there are wild
automorphisms in the case of $2n$ ($n>1$) generators. An open conjecture asserts that every
polynomial automorphism becomes tame after adjunction of a finite number of variables on which its
action is extended by the identity map.

\smallskip

A tame polynomial symplectomorphism is a tame automorphism that preserves the Poisson structure --
in other words, the group $\TAut P_n$ is the intersection of the tame automorphism group with $\Aut
P_n$. With the standard Poisson structure fixed, it is easy to see the necessary and sufficient
conditions for elementary automorphisms to be symplectic.

\smallskip

Similarly, tame automorphisms of the Weyl algebra are defined.

\smallskip

One of the crucial properties of the homomorphism $\phi_{[p]}$, proved in \cite{4}, is as follows.
\begin{prop}\label{proptame}
  The restriction of $\phi_{[p]}$ to the subgroup $\TAut W_{n,\mathbb{C}}$ is an isomorphism
  $$
  \TAut W_{n,\mathbb{C}}\rightarrow \TAut P_{n,\mathbb{C}}
  $$
\end{prop}

Another important result is the symplectic version of a classical theorem of D. Anick (\cite{An}),
which states that the tame automorphism subgroup is dense in the automorphism group in the formal
power series topology. The proof is in \cite{KGE}.
\begin{thm} \label{appthm}
The subgroup $\TAut P_{n,\mathbb{K}}$ ($\Char \mathbb{K} = 0$) is dense in $\Aut P_{n,\mathbb{K}}$
in the power series topology.
\end{thm}

Approximation by tame automorphisms is essentially Anick's theorem and, in our context, its
symplectic analogue (Theorem \ref{appthm}). Given a symplectomorphism, one may choose a sequence of
tame symplectomorphism which converges to it in the power series topology. Then, using the tame
isomorphism of Proposition \ref{proptame}, one may form a sequence of tame automorphisms of the
corresponding Weyl algebra; one can then take its formal limit (whose action is given by power
series in the Weyl generators) and ask whether this limit is well defined (independent of the
choice of tame symplectomorphism sequence) and exists in $\Aut W_n$. If one manages to prove the
correctness and power series truncation, then the inverse to the homomorphism $\phi_{[p]}$ is
constructed and Conjecture \ref{mainconj} is proved. The tame approximation method is the main
subject of \cite{K-BE4}. As it turns out, the simplest way to construct the inverse (the lifting
map) is to introduce an augmentation of the Poisson (and Weyl commutator) structure by adjoining a
central variable, which is analogous to the deformation parameter (Planck's constant) in the
Kontsevich's quantization recipe and which distorts the power series topology, construct a well
defined lifting map for the new algebras and then show that a specialization of the augmentation
variables (required to return to the non-augmented case of Conjecture \ref{mainconj}) is valid.

\smallskip

\subsubsection{Approximation and the singularity trick}

As it turns out (Proposition \ref{skewthetaprop}), the augmented version of the morphism $\Phi$
corresponds to a morphism of the automorphism group of the skew augmented algebra, which behaves
well with respect to the power series topology; it is this fact (together with tame approximation,
or rather with the fact that $\Phi$ is the identity map on the tame automorphisms) that allows us
to prove the key Proposition \ref{mainprop} from which the Main Theorem follows by specialization.

The proof of Propositions \ref{skewthetaprop} and \ref{mainprop} utilizes a certain "singularity
trick". Essentially it is a technique that, by examining certain curves in $\Aut$, allows one to
efficiently control the height of the higher-degree terms in an automorphism and its image under
$\Phi$ which is near the identity automorphism. In the main text, the two statements that comprise
this technique are Lemma \ref{lem1} and Proposition \ref{singtrick}.

\smallskip

The idea of Poisson (and Weyl) structure augmentation, which enables the proof of both Main Theorem
of this paper and Conjecture \ref{mainconj} in \cite{K-BE4}, and the singularity trick complement
each other and {\bf constitute the cornerstone of our approach to the polynomial symplectomorphism
quantization (lifting) problem} by way of enabling a stronger form of tame approximation. This
advantage is offset by the need to specialize the deformation parameters (Planck constant) in order
to return to the non-augmented case, as well as by the necessity to introduce the general form of
commutation relations (which we call \textbf{skew} augmented algebra structure, owing to its
antisymmetry) however the specialization can be proven correct, as in the last part of the proof of
the Main Theorem as well as in the proof in \cite{K-BE4}.

\smallskip

The singularity trick is rather useful when dealing with direct systems of varieties equipped with
the power series topology. It was first introduced in our prior paper \cite{KBYu} (Theorem 3.2,
Lemma 3.5, 3.6 and 3.7). In our proof, this technique is employed in an essentially the same
manner, and is contained in Lemma \ref{lem1} and Proposition \ref{singtrick}.

\smallskip

For the archetypal case of the automorphism group of the commutative polynomial algebra
$\mathbb{K}[x_1,\ldots, x_n]$, the situation is as follows.

Let $L=L(t)$ be a curve of linear automorphisms, i.e. a curve
$$L \subset \Aut(K[x_1,\dots,x_n]),$$ whose points are linear substitutions. Suppose that, as $t$ tends to zero, the $i$-th eigenvalue of the matrix $L(t)$ (corresponding to the linear changes of variables) also tends to zero as $t^{k_i}$, $k_i\in\mathbb{N}$. Such a family always exists.

Suppose now that the degrees $\lbrace k_i,\;i=1,\ldots n\rbrace$ of singularity of eigenvalues at
zero are such that for every pair $(i,j)$, if $k_i\neq k_j$, then there exists a positive integer
$m$ such that
$$
\text{either\;\;} k_im\leq k_j\;\;\text{or\;\;}k_jm\leq k_i.
$$

The largest such $m$ we will call the \textbf{order} of $L(t)$ at $t=0$. As $k_i$ are all set to be
positive integer, the order equals the integer part of $\frac{k_{\text{max}}}{k_{\text{min}}}$.

Let $M\in \Aut_0(K[x_1,\dots,x_n])$ be a polynomial automorphism.
\begin{lem}    \label{Lm2} The curve $L(t)ML(t)^{-1}$ has no singularity at zero for any $L(t)$ of order $\leq N$ if and only if $M\in \hat{H}_N$, where $\hat{H}_N$ is the subgroup of automorphisms which are homothety modulo the $N$-th power of the
augmentation ideal $(x_1,\ldots, x_n)$.
\end{lem}

The proof can be found in \cite{KBYu}. Also, a direct analogue of this lemma is Proposition
\ref{singtrick}, with proof borrowing its essential features from \cite{KBYu}.

\begin{remark}\label{weakmotivation}
The emergence of the singularity trick (together with the general consideration of singularities of
$\Ind$-schemes) may be viewed as a reflection of the infinite-dimensional nature of the problem.
Indeed, in the context of finite-dimensional algebraic groups, the most natural approach to a
problem such as Conjecture \ref{mainconj} would be the construction of a morphism which induces an
isomorphism of the Lie algebras. However, as was pointed out in \cite{4} (specifically in Section 3
and also in Remark 2 of \cite{4}), the naive infinite-dimensional translation of this approach is
unsatisfactory. In fact, the Lie algebras (defined as the algebras of derivations) of
$W_{n,\mathbb{Q}}$ and $P_{n,\mathbb{Q}}$ are not isomorphic to each other. Therefore, the
positivity of Conjecture \ref{mainconj} leads to the breakdown of the Lie algebra isomorphism
approach typical of finite-dimensional cases. This pathological infinite-dimensional effect could
be the result of the $\Ind$-schemes being singular at every point \cite{4}.

The study of singularities of curves in $\Ind$-schemes offers a viable alternative approach to the
problem. At the technical level, it is a device which allows the efficient handling of the grading
and the induced topology, however some of the finer points of augmented tame approximation are tied
to the singularity trick as well.

\end{remark}

\subsubsection{Augmented and skew Weyl and Poisson algebras}

In the proof of our main result we make use of the augmented (deformed, or quantized) versions of
$W_n$ and $P_n$, which we now define. In order to deform the Weyl algebra $W_n$, we introduce the
augmentation parameter $h$ and modify the commutator between $d$ and $x$ by setting
$$
[d_i, x_j] = h \delta_{ij}.
$$
Alternatively, one can start with the free algebra $\mathbb{K}\langle a_1,\ldots, a_n, b_1,\ldots,
b_n, c\rangle$ and take the quotient with respect to the following set of identities:
$$
a_ia_j-a_ja_i,\;\;b_ib_j-b_jb_i\;\;,
b_ia_j-a_jb_i-\delta_{ij}c,\;\;
a_ic-ca_i,\;\;
b_ic-cb_i.
$$
The quotient algebra, which we denote by $W^h_n$ (or $W^h_{n,\mathbb{K}}$ to indicate the base
field) is the augmented Weyl algebra.

Similarly, we may distort the Poisson bracket of $P_n$:
\begin{equation*}
\lbrace p_i,x_j\rbrace = h\delta_{ij}
\end{equation*}
to reflect the augmentation of $W_n$ into $W^h_n$ in the classical counterpart. (Here we have
renamed the generators $z_i$ into $x_i$ and $p_i$, according to their behavior with respect to the
Poisson bracket. The notation is standard.) The resulting polynomial algebra will be denoted by
$P^h_n$.
\medskip

The algebras $W^h_n$ and $P^h_n$ are connected by an analogue of the Kontsevich homomorphism (which is constructed for the non-augmented case below); the main point is that the independence of this quantized homomorphism of infinite prime implies the independence of infinite prime of the non-augmented morphism. 
A version of these augmented algebras appeared in the work of Myung and Oh \cite{MyOh}.

\smallskip

In order to create a situation in which the power series topology is well respected by the loop
morphisms, we distort the augmentation further by introducing a pair of \textbf{skew augmented}
algebras $W_{n,\mathbb{C}}^h[k_{ij}]$ and $P_{n,\mathbb{C}}^h[k_{ij}]$ (which correspond to the
$h$-augmented Weyl and Poisson algebras, respectively).

These are defined as follows. Let the augmented Poisson generators be denoted by $\xi_i$ with
$1\leq i\leq 2n$ and let $[k_{ij}]$ be an antisymmetric matrix of central variables. The algebra
$P_{n,\mathbb{C}}^h[k_{ij}]$ is generated by $2n$ commuting variables $\xi_i$, the augmentation
variable $h$ and the variables $[k_{ij}]$ (thus being the polynomial algebra in these variables);
the Poisson bracket is defined on the generators $\xi_i$:
$$
\lbrace \xi_i,\xi_j\rbrace = hk_{ij}.
$$
The bracket of any element with $h$ or with any of the $k_{ij}$ is zero. The skew version of the
algebra $W_{n,\mathbb{C}}$ is defined analogously.

It is for these auxiliary algebras that the analogue of the Kontsevich conjecture, together with
the independence of infinite prime, can be proved, as we demonstrate in \cite{K-BE4} as well as in
the present paper. Once that is done, the proof of the main results is finalized by the
specialization argument.

From the standpoint of our context, much of the theory of the Weyl and Poisson algebras remains
unchanged by the augmentation as well as by skew augmentation: the construction of the homomorphism
$\phi_{[p]}$ is identical, the proof of the augmented version of Proposition \ref{propgabber} is
unchanged as well, and the tame augmented symplectomorphism approximation for the $h$-augmented
algebra is established in a way almost identical to the one presented in \cite{KGE}. What does
change is the behavior of $\phi_{[p]}$ and the loop morphisms with respect to the approximation in
the skew case, as the skew augmented version of $\Phi$ will be continuous in the power series topology.
This is the main point of the augmentation.

\subsection{Plan of the paper}

The main body of the paper is divided into three sections (Sections 2 -- 4). In the first section
(Section 2), the construction of the homomorphism $\phi_{[p]}$ is recalled. In this form, the
construction is originally due to Tsuchimoto \cite{13}. A slightly alternative point of view can be
found in \cite{4}.

\smallskip

Section 3 provides the proof of the fact that the maps $\phi_{[p]}$ and $\Phi$ are morphisms.

\smallskip

Finally, Section 4 is dedicated to the proof of the Main Theorem, where the deformed algebras,
augmented tame approximation, and the singularity trick are used to their full extent.

\subsection*{Acknowledgments}

We thank Ivan Arzhantsev, Roman Karasev, David Kazhdan, V. O. Manturov, Eugene Plotkin, Eliyahu
Rips, Ernest Vinberg and Boris Zilber for numerous helpful remarks. We also thank Alexander Zheglov
and Georgy Sharygin, as well as Ilya Karzhemanov and Ilya Zhdanovskii and the participants of their
respective seminars who provided for a most engaging and fruitful discussion of our findings.

This work is supported by the Russian Science Foundation grant No. 17-11-01377.

\section{Infinite primes and the morphism $\phi_{[p]}$}
\subsection{Ultrafilters and infinite primes}
Let $\mathcal{U}\subset 2^{\mathbb{N}}$ be an arbitrary non-principal ultrafilter on the set of all
positive numbers ($\mathbb{N}$ will almost always be regarded as the index set in this note) . Let
$\mathbb{P}$ be the set of all prime numbers, and let $\mathbb{P}^{\mathbb{N}}$ denote the set of
all sequences $p=(p_m)_{m\in\mathbb{N}}$ of prime numbers. We refer to a generic set $A\in
\mathcal{U}$ as an index subset in situations involving the restriction $p_{|A}:A\rightarrow
\mathbb{P}$. We will call a sequence $p$ of prime numbers $\mathcal{U}$-\textbf{stationary} if
there is an index subset $A\in\mathcal{U}$ such that its image $p(A)$ consists of one point.
\smallskip

A sequence $p:\mathbb{N}\rightarrow\mathbb{P}$ is bounded if the image $p(\mathbb{N})$ is a finite
set. Thanks to the ultrafilter finite intersection property, bounded sequences are necessarily
$\mathcal{U}$-stationary.

\smallskip

Any non-principal ultrafilter $\mathcal{U}$ generates a congruence
\begin{equation*}
\sim_{\mathcal{U}}\subseteq \mathbb{P}^{\mathbb{N}}\times\mathbb{P}^{\mathbb{N}}
\end{equation*}
in the following way. Two sequences $p^1$ and $p^2$ are $\mathcal{U}$-congruent iff there is an
index subset $A\in\mathcal{U}$ such that for all $m\in A$ the following equality holds:
\begin{equation*}
p^1_m=p^2_m.
\end{equation*}
The corresponding quotient
\begin{equation*}
{}^*\mathbb{P}\equiv \mathbb{P}^{\mathbb{N}}/\sim_{\mathcal{U}}
\end{equation*}
contains as a proper subset the set of all primes $\mathbb{P}$ (naturally identified with classes
of $\mathcal{U}$-stationary sequences), as well as classes of unbounded sequences. The latter are
referred to as nonstandard, or infinitely large, primes. We will use both names and normally denote
such elements by $[p]$, mirroring the convention for equivalence classes. The terminology is
justified, as the set of nonstandard primes is in one-to-one correspondence with the set of prime
elements in the ring ${}^*\mathbb{Z}$ of nonstandard integers in the sense of Robinson \cite{R}.

\smallskip

Indeed, one may utilize the following construction, which was thoroughly studied\footnote{also cf.
\cite{C}} in \cite{LLS}. Consider the ring $\mathbb{Z}^{\omega}=\prod_{m\in\mathbb{N}}\mathbb{Z}$ -
the product of countably many copies of $\mathbb{Z}$ indexed by $\mathbb{N}$. The minimal prime
ideals of $\mathbb{Z}^{\omega}$ are in bijection with the set of all ultrafilters on $\mathbb{N}$
(perhaps it is opportune to remind that the latter is precisely the Stone-Cech compactification
$\beta\mathbb{N}$ of $\mathbb{N}$ as a discrete space). Explicitly, if for every
$a=(a_m)\in\mathbb{Z}^{\omega}$ one defines the \textbf{support complement} as
\begin{equation*}
\theta(a)=\lbrace m\in\mathbb{N}\;|\;a_m=0\rbrace
\end{equation*}
and for an arbitrary ultrafilter $\mathcal{U}\in 2^{\mathbb{N}}$ sets
\begin{equation*}
(\mathcal{U})=\lbrace a\in\mathbb{Z}^{\omega}\;|\;\theta(a)\in\mathcal{U}\rbrace,
\end{equation*}
then one obtains a minimal prime ideal of $\mathbb{Z}^{\omega}$. It is easily shown that every
minimal prime ideal is of such a form. Of course, the index set $\mathbb{N}$ may be replaced by any
set $I$, after which one easily gets the description of minimal primes of $\mathbb{Z}^I$ (since
those correspond to ultrafilters, there are exactly $2^{2^{|I|}}$ of them if $I$ is infinite and
$|I|$ when $I$ is a finite set). Note that in the case of finite index set all ultrafilters are
principal, and the corresponding $(\mathcal{U})$ are of the form $\mathbb{Z}\times\cdots\times
(0)\times\cdots\times\mathbb{Z}$ - a textbook example.

Similarly, one may replace each copy of $\mathbb{Z}$ by an arbitrary integral domain and repeat the
construction above. If for instance all the rings in the product happen to be fields, then, since
the product of any number of fields is von Neumann regular, the ideal $(\mathcal{U})$ will also be
maximal.
\smallskip

The ring of nonstandard integers  may be viewed as a quotient (an ultrapower)
\begin{equation*}
\mathbb{Z}^{\omega}/(\mathcal{U})={}^*\mathbb{Z}.
\end{equation*}
The class of $\mathcal{U}$-congruent sequences $[p]$ corresponds to an element (also an equivalence
class) in ${}^*\mathbb{Z}$, which may as well as $[p]$ be represented by a prime number sequence
$p=(p_m)$, only in the latter case some but not too many of the primes $p_m$ may be replaced by
arbitrary integers. For all intents and purposes, this difference is insignificant.

\smallskip
Also, observe that $[p]$ indeed generates a maximal prime ideal in ${}^*\mathbb{Z}$: if one for
(any) $p\in [p]$ defines an ideal in $\mathbb{Z}^{\omega}$ as
\begin{equation*}
(p,\; \mathcal{U})=\lbrace a\in\mathbb{Z}^{\omega}\;|\;\lbrace m\;|\; a_m\in p_m\mathbb{Z}\rbrace\in\mathcal{U}\rbrace,
\end{equation*}
then, taking the quotient $\mathbb{Z}^{\omega}/(p,\; \mathcal{U})$ in two different ways, one
arrives at an isomorphism
\begin{equation*}
{}^*\mathbb{Z}/([p])\simeq \left(\prod_{m}\mathbb{Z}_{p_m}\right)/(\mathcal{U}),
\end{equation*}
and the right-hand side is a field by the preceding remark.
\smallskip
For a fixed non-principal $\mathcal{U}$ and an infinite prime $[p]$, we will call the quotient
\begin{equation*}
\mathbb{Z}_{[p]}\equiv {}^*\mathbb{Z}/([p])
\end{equation*}
\textbf{the nonstandard residue field} of $[p]$. Under our assumptions this field has
characteristic zero.

\medskip

\subsection{Algebraic closure of nonstandard residue field}
We have seen that the objects $[p]$ - the infinite prime - behaves similarly to the usual prime
number in the sense that a version of a residue field corresponding to this object may be
constructed. Note that the standard residue fields are contained as a degenerate case in this
construction, namely if we drop the condition of unboundedness and instead consider
$\mathcal{U}$-stationary sequences, we will arrive at a residue field isomorphic to
$\mathbb{Z}_{p}$, with $p$ being the image of the stationary sequence in the chosen class. The
fields of the form $\mathbb{Z}_{[p]}$ are a realization of what is known as pseudofinite field, cf.
\cite{BH}.

\smallskip

The nonstandard case is surely more interesting. While the algebraic closure of a standard residue
field is countable, the nonstandard one itself has the cardinality of the continuum. Its algebraic
closure is also of that cardinality and has characteristic zero, which implies that it is
isomorphic to the field of complex numbers. We proceed by demonstrating these facts.
\smallskip

\begin{prop}
For any infinite prime $[p]$ the residue field $\mathbb{Z}_{[p]}$ has the cardinality of the
continuum\footnote{There is a general statement on cardinality of ultraproduct due to Frayne,
Morel, and Scott \cite{FMS}. We believe the proof of this particular instance may serve as a neat
example of what we are dealing with in the present paper.}.
\end{prop}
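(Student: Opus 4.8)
The plan is to identify $\mathbb{Z}_{[p]}$ with the ultraproduct $\left(\prod_{m}\mathbb{F}_{p_m}\right)/\mathcal{U}$ of the finite fields $\mathbb{F}_{p_m}=\mathbb{Z}_{p_m}$ — this identification has already been recorded above — and then to establish the two inequalities $|\mathbb{Z}_{[p]}|\le 2^{\aleph_0}$ and $|\mathbb{Z}_{[p]}|\ge 2^{\aleph_0}$ separately, concluding by the Cantor--Schr\"oder--Bernstein theorem. The upper bound is immediate: identifying each $\mathbb{F}_{p_m}$ with its standard set of representatives $\{0,1,\dots,p_m-1\}\subset\mathbb{Z}$ gives an injection $\prod_m\mathbb{F}_{p_m}\hookrightarrow\mathbb{Z}^{\omega}$, and $|\mathbb{Z}^{\omega}|=\aleph_0^{\aleph_0}=2^{\aleph_0}$; passing to the quotient by $\mathcal{U}$ can only decrease cardinality.

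For the lower bound the first step is the elementary but essential observation that, since $[p]$ is \emph{infinitely large}, the values $p_m$ tend to infinity along $\mathcal{U}$: for every $N$ one has $\{m\,:\,p_m\ge N\}\in\mathcal{U}$. Indeed, were the complementary set $\{m\,:\,p_m<N\}$ in $\mathcal{U}$, then, being the finite union $\bigsqcup_{q<N}\{m\,:\,p_m=q\}$ over primes $q<N$, one of its pieces would have to lie in $\mathcal{U}$ — and here is the one point in the whole argument where we genuinely use that $\mathcal{U}$ is an ultrafilter and not merely a filter — forcing $[p]=[q]$ to be a standard prime, contrary to assumption. This translation of ``infinitely large'' into ``$p_m\to\infty$ $\mathcal{U}$-almost everywhere'' is the only subtlety; everything else is bookkeeping.

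With this in hand I would construct an explicit injection $\{0,1\}^{\mathbb{N}}\hookrightarrow\mathbb{Z}_{[p]}$. For a binary sequence $\sigma=(\sigma_1,\sigma_2,\dots)$ put $k(m)=\lfloor\log_2 p_m\rfloor$ and let $a^{\sigma}$ be the sequence with $a^{\sigma}_m=\sum_{i=1}^{k(m)}\sigma_i2^{i-1}\in\{0,\dots,2^{k(m)}-1\}\subset\mathbb{F}_{p_m}$ (no reduction is needed, since $2^{k(m)}\le p_m$). If $\sigma$ and $\tau$ first differ in position $j$, then for every $m$ with $k(m)\ge j$ — a set belonging to $\mathcal{U}$ by the previous step, because $k(m)\ge j\iff p_m\ge 2^j$ — the integers $a^{\sigma}_m$ and $a^{\tau}_m$ have distinct binary expansions of length $k(m)$ and both lie in $[0,p_m)$, hence represent distinct classes in $\mathbb{F}_{p_m}$. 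Consequently $\{m\,:\,a^{\sigma}_m=a^{\tau}_m\}$ is disjoint from a member of $\mathcal{U}$, so it is not in $\mathcal{U}$, and the classes of $a^{\sigma}$ and $a^{\tau}$ in the ultraproduct are different. Thus $\sigma\mapsto[a^{\sigma}]$ is injective, whence $|\mathbb{Z}_{[p]}|\ge 2^{\aleph_0}$, and the proposition follows.

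I expect no serious obstacle here: the construction is transparent, and the worst that can go wrong is mishandling the distinction between equality of sequences and equality of their $\mathcal{U}$-classes — which is exactly controlled by the ultrafilter property invoked in the second step. It is worth remarking that the same scheme, with $\mathbb{F}_{p_m}$ replaced by any countable family of finite structures whose cardinalities are $\mathcal{U}$-unbounded, reproduces the Frayne--Morel--Scott dichotomy alluded to above: a countably indexed ultraproduct of finite structures over a non-principal ultrafilter is either finite or of cardinality continuum.
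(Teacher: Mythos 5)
Your proof is correct, and it runs in the opposite direction from the paper's argument. The paper fixes the Cantor set $\mathfrak{P}=\{0,1\}^{\omega}$ with the $2$-adic metric, sends a class $[a]\in\mathbb{Z}_{[p]}$ to the \emph{ultralimit} of the binary expansions $e(a_m)$ (whose existence requires the Hausdorff quasi-compactness of $\mathfrak{P}$), and then verifies surjectivity by picking, for each target $x$, the nearest point of $\{e(0),\dots,e(p_m-1)\}$ to $x$ in each coordinate. You instead build an explicit \emph{injection} of $\{0,1\}^{\mathbb{N}}$ into the ultraproduct by truncating each bit-string to its first $\lfloor\log_2 p_m\rfloor$ bits in the $m$-th coordinate, and pair it with the trivial upper bound $|\prod_m\mathbb{F}_{p_m}|\le 2^{\aleph_0}$ via Cantor--Schr\"oder--Bernstein. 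The underlying combinatorial content is the same — the unboundedness of $p_m$ along $\mathcal{U}$ lets you encode arbitrarily long binary prefixes — but your version is purely set-theoretic, dispensing with the ultralimit/compactness machinery; it is also slightly more careful than the paper in one spot: where the paper simply asserts that unboundedness of $(p_m)$ puts every set $\{k:p_k>2^m\}$ in $\mathcal{U}$, you derive this from the definition of a nonstandard prime (not $\mathcal{U}$-stationary) by the finite-partition property of ultrafilters, which is the correct justification, since mere unboundedness of the sequence does not by itself preclude a standard value being taken $\mathcal{U}$-often. The paper's route has the small aesthetic advantage of exhibiting $\mathbb{Z}_{[p]}$ as mapping \emph{onto} a familiar continuum-sized compactum; yours is shorter to verify and, as you note, generalizes verbatim to any $\mathcal{U}$-unbounded family of finite structures, recovering the Frayne--Morel--Scott dichotomy.
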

\begin{proof}
It suffices to show there is a surjection
\begin{equation*}
h^*:\mathbb{Z}_{[p]}\rightarrow \mathfrak{P},
\end{equation*}
where $\mathfrak{P}=\lbrace 0,1\rbrace^{\omega}$ is the Cantor set given as the set of all
countable strings of bits with the 2-adic metric
\begin{equation*}
d_2(x,y)=1/k,\;\;k=\min\lbrace m\;|\;x_m\neq y_m\rbrace.
\end{equation*}
The map $h^*$ is constructed as follows. If $\mathfrak{Z}\subset \mathfrak{P}$ is the subset of all
strings with finite number of ones in them, and
\begin{equation*}
e:\mathbb{Z}_+\rightarrow\mathfrak{Z},\;\;e\left(\sum_{k<m}f_k2^k\right)=(f_1,\ldots,f_{m-1},0,\ldots)
\end{equation*}
is the bijection that sends a nonnegative integer to its binary decomposition, then for a class
representative $a=(a_m)\in[a]\in\mathbb{Z}_{[p]}$ set $h^*(a)$ to be the (unique) ultralimit of the
sequence of points $\lbrace x_m=e(a_m)\rbrace$. The correctness of this map rests on the property
of the Cantor set being Hausdorff quasi-compact. Surjectivity is then established directly:
consider an arbitrary $x\in\mathfrak{P}$. For each $m\in\mathbb{N}$ the set
\begin{equation*}
\mathfrak{P}_m=\lbrace e(0),e(1),\ldots,e(p_m-1)\rbrace
\end{equation*}
consists of $p_m$ distinct points. Let $x_m$ be the nearest to $x$ point from this set with respect
to the 2-adic metric. The sequence $(p_m)$ is unbounded, so that for every $m\in\mathbb{N}$ the
index subset
\begin{equation*}
A_m=\lbrace k\in\mathbb{N}\;|\;p_k>2^m\rbrace
\end{equation*}
belongs to the ultrafilter $\mathcal{U}$. It is easily seen that for every $k\in A_m$ one has:

\begin{equation*}
d_2(x,x_k)<1/m
\end{equation*}
But that effectively means that the sequence $(x_m)$ has the ultralimit $x$, after which
$a_m=e^{-1}(x_m)$ yields the desired preimage.
\end{proof}
As an immediate corollary of this proposition and the well-known Steinitz theorem, one has
\begin{thm}
The algebraic closure $\overline{\mathbb{Z}_{[p]}}$ of $\mathbb{Z}_{[p]}$ is isomorphic to the
field of complex numbers.
\end{thm}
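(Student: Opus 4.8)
The plan is to invoke Steinitz's classification of algebraically closed fields by characteristic and transcendence degree over the prime field, so the entire argument reduces to two cardinality checks. First I would observe that $\mathbb{Z}_{[p]}$ has characteristic zero (this was recorded after the definition of the nonstandard residue field), hence so does its algebraic closure $\overline{\mathbb{Z}_{[p]}}$, and the prime field is $\mathbb{Q}$. Next, by the Proposition just proved, $|\mathbb{Z}_{[p]}| = 2^{\aleph_0}$; since an algebraic closure of an infinite field $K$ satisfies $|\overline{K}| = |K|$ (each element is a root of some polynomial over $K$, and there are only $|K|$ polynomials, each with finitely many roots), we get $|\overline{\mathbb{Z}_{[p]}}| = 2^{\aleph_0}$ as well. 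It remains to pin down the transcendence degree of $\overline{\mathbb{Z}_{[p]}}$ over $\mathbb{Q}$: an algebraically closed field $F \supseteq \mathbb{Q}$ of transcendence degree $\kappa$ has cardinality $\max(\aleph_0, \kappa)$, so $\kappa = 2^{\aleph_0}$ here; and the field $\mathbb{C}$ is likewise algebraically closed of characteristic zero and transcendence degree $2^{\aleph_0}$ over $\mathbb{Q}$. By Steinitz, two algebraically closed fields with the same characteristic and the same transcendence degree over their prime field are isomorphic, so $\overline{\mathbb{Z}_{[p]}} \simeq \mathbb{C}$.

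The only genuinely delicate point is the justification of the cardinality-versus-transcendence-degree formula and, implicitly, the use of the axiom of choice (both in Steinitz's theorem and in the existence/uniqueness of algebraic closures and transcendence bases); this is standard and I would simply cite it. A secondary point worth a remark is that the isomorphism produced is highly non-canonical — it depends on a choice of bijection between transcendence bases — which is fine for the purposes of the present paper, since what is ultimately at stake is the existence of such an embedding of $\overline{\mathbb{Z}_{[p]}}$ into (equivalently, onto) $\mathbb{C}$ rather than any particular one. No part of this requires unwinding the ultrafilter construction further: the Proposition has already done the only nontrivial work, namely computing $|\mathbb{Z}_{[p]}|$.

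I do not anticipate a real obstacle here — the statement is essentially a corollary, as the text already signals. If one wanted to be scrupulous, the single line deserving care is ``$|\overline{K}| = |K|$ for infinite $K$'': this follows because $\overline{K} = \bigcup_{f \in K[t] \setminus \{0\}} \{\text{roots of } f\}$, a union of $|K[t]| = |K|$ finite sets, giving $|\overline{K}| \le |K| \cdot \aleph_0 = |K|$, and the reverse inequality is trivial. Everything else is bookkeeping with Steinitz.
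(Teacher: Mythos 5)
Your argument is correct and is exactly the route the paper takes: the paper presents the theorem as ``an immediate corollary'' of the cardinality proposition together with Steinitz's theorem, which is precisely the two-step cardinality-plus-transcendence-degree check you spell out. Your write-up simply makes explicit the bookkeeping (characteristic zero, $|\overline{K}|=|K|$ for infinite $K$, transcendence degree $2^{\aleph_0}$, non-canonicity of the resulting isomorphism) that the paper leaves implicit.
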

\smallskip

We now fix the notation for the aforementioned isomorphisms in order to employ it in the next
section.

For any nonstandard prime $[p]\in {}^*\mathbb{P}$ fix an isomorphism
$\alpha_{[p]}:\mathbb{C}\rightarrow \overline{\mathbb{Z}_{[p]}}$ coming from the preceding theorem.
Denote by $\Theta_{[p]}:\overline{\mathbb{Z}_{[p]}}\rightarrow \overline{\mathbb{Z}_{[p]}}$ the
nonstandard Frobenius automorphism - that is, a well-defined field automorphism that sends a
sequence of elements to a sequence of their $p_m$-th powers:
\begin{equation*}
(x_m)\mapsto(x_m^{p_m}).
\end{equation*}
The automorphism $\Theta_{[p]}$ is identical on $\mathbb{Z}_{[p]}$; conjugated by $\alpha_{[p]}$,
it yields a wild automorphism of complex numbers, as by assumption no finite power of it (as
always, in the sense of index subsets $A\in\mathcal{U}$) is the identity homomorphism.

\bigskip

\subsection{Extension of the Weyl algebra}
The $n$-th Weyl algebra $W_{n,\mathbb{C}}\simeq W_{n,\overline{\mathbb{Z}_{[p]}}}$ can be realized
as a proper subalgebra of the following ultraproduct of algebras
\begin{equation*}
\mathcal{W}_n(\mathcal{U},[p])=\left(\prod_{m\in\mathbb{N}}W_{n,\mathbb{F}_{p_m}}\right)/\mathcal{U}.
\end{equation*}
Here for any $m$ the field $\mathbb{F}_{p_m}=\overline{\mathbb{Z}_{p_m}}$ is the algebraic closure
of the residue field $\mathbb{Z}_{p_m}$. This larger algebra contains elements of the form
$(\zeta^{I_m})_{m\in\mathbb{N}}$ with unbounded $|I_m|$ - something which is not present in
$W_{n,\overline{\mathbb{Z}_{[p]}}}$, hence the proper embedding. Note that for the exact same
reason (with degrees $|I_m|$ of differential operators having been replaced by degrees of minimal
polynomials of algebraic elements) the inclusion
\begin{equation*}
\overline{\mathbb{Z}_{[p]}}\subseteq \left(\prod_{m\in\mathbb{N}}{F}_{p_m}\right)/\mathcal{U}
\end{equation*}
is also proper.
\smallskip

It turns out that, unlike its standard counterpart $W_{n,\mathbb{C}}$, the algebra
$\mathcal{W}_n(\mathcal{U},[p])$ has a huge center described in this proposition:
\begin{prop}
The center of the ultraproduct of Weyl algebras over the sequence of algebraically closed fields
$\lbrace \mathbb{F}_{p_m}\rbrace$ coincides with the ultraproduct of centers of
$W_{n,\mathbb{F}_{p_m}}$:
\begin{equation*}
C(\mathcal{W}_n(\mathcal{U},[p]))=\left(\prod_{m}C(W_{n,\mathbb{F}_{p_m}})\right)/\mathcal{U}.
\end{equation*}
\end{prop}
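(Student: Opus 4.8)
The plan is to prove the stated equality of subsets of $\mathcal{A}_n(\mathcal{U},[p])$ by establishing the two inclusions separately; the inclusion $(\prod_{m}C(A_{n,\mathbb{F}_{p_m}}))/\mathcal{U}\subseteq C(\mathcal{A}_n(\mathcal{U},[p]))$ is immediate and I would dispatch it first. If $a=(a_m)/\mathcal{U}$ admits a representative with $a_m$ central in $A_{n,\mathbb{F}_{p_m}}$ on some index subset $A\in\mathcal{U}$, then for any $b=(b_m)/\mathcal{U}$ the sequence $(a_mb_m-b_ma_m)$ vanishes on $A$, so $ab=ba$ in the ultraproduct. The real content is the opposite inclusion, and it rests on one elementary observation that I would isolate beforehand: in any Weyl algebra $A_{n,K}$ an element which commutes with the $2n$ standard generators $\zeta_1,\dots,\zeta_{2n}$ is automatically central, because those generators together with the (automatically central) scalars from $K$ generate $A_{n,K}$ as a ring.

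Granting this, I would argue as follows. Let $a=(a_m)/\mathcal{U}$ be central in $\mathcal{A}_n(\mathcal{U},[p])$. The diagonal elements $\zeta_1,\dots,\zeta_{2n}$ --- the images of the standard Weyl generators under $A_{n,\overline{\mathbb{Z}_{[p]}}}\hookrightarrow\mathcal{A}_n(\mathcal{U},[p])$ --- lie in the ultraproduct, so $a$ commutes with each of them. By the definition of the ultraproduct congruence this means that for every $i$ the index set $B_i=\{m\in\mathbb{N}\mid a_m\zeta_i=\zeta_i a_m\text{ in }A_{n,\mathbb{F}_{p_m}}\}$ belongs to $\mathcal{U}$; the finite intersection property of the filter $\mathcal{U}$ then gives $B=\bigcap_{i=1}^{2n}B_i\in\mathcal{U}$. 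For every $m\in B$ the element $a_m$ commutes with all the generators, hence is central in $A_{n,\mathbb{F}_{p_m}}$ by the preliminary observation, and adjusting the representative off $B$ (say, putting $a_m=0$ there) exhibits $a$ as an element of $(\prod_{m}C(A_{n,\mathbb{F}_{p_m}}))/\mathcal{U}$. As a corollary I would record that, since $C(A_{n,\mathbb{F}_{p_m}})=\mathbb{F}_{p_m}[\zeta_1^{p_m},\dots,\zeta_{2n}^{p_m}]$ in characteristic $p_m$ and the sequence $(p_m)$ is unbounded, this identification makes precise the earlier informal statement that $\mathcal{A}_n(\mathcal{U},[p])$, unlike $A_{n,\mathbb{C}}$, has a ``huge'' center.

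I do not expect a genuine obstacle in this proposition; the only two points requiring care are that centrality in the ultraproduct is already detected by the finitely many generators $\zeta_i$ (so that the universally quantified condition defining the center collapses to a finite conjunction) and that finitely many $\mathcal{U}$-large conditions can be intersected into a single $\mathcal{U}$-large one. One could alternatively derive the whole statement in a single line from {\L}o\'s's theorem applied to the ring-language formula $\varphi(x)\colon\ \forall y\,(xy=yx)$, but I would prefer to spell out the elementary argument above so as to keep the subsection self-contained and in the explicit spirit of the preceding proofs.
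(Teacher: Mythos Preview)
Your argument is correct and complete; both inclusions are handled properly, and the key reduction to the finitely many generators together with the finite intersection property of $\mathcal{U}$ is exactly the point. The paper itself does not give a proof of this proposition at all---it simply writes ``The proof is elementary and is left to the reader''---so your write-up is a faithful unpacking of what the authors deem routine, and there is nothing further to compare.
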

The proof is elementary and is left to the reader. As in positive characteristic the center
$C(W_{n,\mathbb{F}_{p}})$ is given by the polynomial algebra
\begin{equation*}
\mathbb{F}_{p}[x_1^{p},\ldots,x_n^{p},y_1^{p},\ldots,y_n^{p}]\simeq \mathbb{F}_{p}[\xi_1,\ldots,\xi_{2n}],
\end{equation*}
there is an injective $\mathbb{C}$-algebra homomorphism
\begin{equation*}
\mathbb{C}[\xi_1,\ldots\xi_{2n}]\rightarrow \left(\prod_{m}\mathbb{F}_{p_m}[\xi_1^{(m)},\ldots\xi_{2n}^{(m)}]\right)/\mathcal{U}
\end{equation*}
from the algebra of regular functions on $\mathbb{A}_{\mathbb{C}}^{2n}$ to the center of
$\mathcal{W}_n(\mathcal{U},[p])$, evaluated on the generators in a straightforward way:
\begin{equation*}
\xi_i\mapsto[(\xi_i^{(m)})_{m\in\mathbb{N}}].
\end{equation*}
Just as before, this injection is proper.

Furthermore, the image of this monomorphism (the set which we will simply refer to as the
polynomial algebra) may be endowed with the canonical Poisson bracket. Recall that in positive
characteristic case for any $a,b\in \mathbb{Z}_p[\xi_1,\ldots,\xi_{2n}]$ one can define
\begin{equation*}
\lbrace a,b\rbrace=-\pi\left(\frac{[a_0,b_0]}{p}\right).
\end{equation*}
Here $\pi:W_{n,\mathbb{Z}}\rightarrow W_{n,\mathbb{Z}_p}$ is the modulo $p$ reduction of the Weyl
algebra, and $a_0, b_0$ are arbitrary lifts of $a,b$ with respect to $\pi$. The operation is well
defined, takes values in the center and satisfies the Leibnitz rule and the Jacobi identity. On the
generators one has
\begin{equation*}
\lbrace \xi_i,\xi_j\rbrace=\omega_{ij}.
\end{equation*}
The Poisson bracket is trivially extended to the entire center
$\mathbb{F}_{p}[\xi_1,\ldots,\xi_{2n}]$ and then to the ultraproduct of centers. Observe that the
Poisson bracket of two elements of bounded degree is again of bounded degree, hence one has the
bracket on the polynomial algebra.
\medskip

\subsection{Endomorphisms and symplectomorphisms}

The point of this construction lies in the fact that thus defined Poisson structure on the
(injective image of) polynomial algebra is preserved under all endomorphisms of
$\mathcal{A}_n(\mathcal{U},[p])$ of bounded degree. Every endomorphism of the standard Weyl algebra
is specified by an array of coefficients $(a_{i,I})$ (which form the images of the generators in
the standard basis); these coefficients are algebraically dependent, but with only a finite number
of bounded-order constraints. Hence the endomorphism of the standard Weyl algebra can be extended
to the larger algebra $\mathcal{A}_n(\mathcal{U},[p])$. The restriction of any such obtained
endomorphism on the polynomial algebra $\mathbb{C}[\xi_1,\ldots,\xi_{2n}]$ preserves the Poisson
structure. In this setup the automorphisms of the Weyl algebra correspond to symplectomorphisms of
$\mathbb{A}_{\mathbb{C}}^{2n}$.

\medskip

\textbf{Example}. If $x_i$ and $y_i$ are standard generators, then one may perform a linear
symplectic change of variables:
\begin{eqnarray*}
f(x_i)=\sum_{j=1}^na_{ij}x_j+\sum_{j=1}^{n}a_{i,n+j}y_j,\;\;i=1,\ldots,n,\\
f(d_i)=\sum_{j=1}^na_{i+n,j}x_j+\sum_{j=1}^{n}a_{i+n,n+j}y_j,\;\; a_{ij}\in\mathbb{C}.
\end{eqnarray*}
In this case the corresponding polynomial automorphism $f^c$ of
\begin{equation*}
\mathbb{C}[\xi_1,\ldots,\xi_{2n}]\simeq\mathbb{C}[x_1^{[p]},\ldots,x_n^{[p]},y_1^{[p]},\ldots,y_n^{[p]}]
\end{equation*}
acts on the generators $\xi$ as
\begin{equation*}
f^c(\xi_i)=\sum_{j=1}^{2n}(a_{ij})^{[p]}\xi_j,
\end{equation*}
where the notation $(a_{ij})^{[p]}$ means taking the base field automorphism that is conjugate to
the nonstandard Frobenius via the Steinitz isomorphism.

Let $\gamma:\mathbb{C}\rightarrow\mathbb{C}$ be an arbitrary automorphism of the field of complex
numbers. Then, given an automorphism $f$ of the Weyl algebra $W_{n,\mathbb{C}}$ with coordinates
$(a_{i,I})$, one can build another algebra automorphism using the map $\gamma$. Namely, the
coefficients $\gamma(a_{i,I})$ define a new automorphism $\gamma_*(f)$ of the Weyl algebra, which
is of the same degree as the original one. In other words, every automorphism of the base field
induces a map $\gamma_*:W_{n,\mathbb{C}}\rightarrow W_{n,\mathbb{C}}$ which preserves the structure
of the ind-object. It obviously is a group homomorphism.

\medskip

Now, if $P_{n,\mathbb{C}}$ denotes the commutative polynomial algebra with Poisson bracket, we may
define an $\Ind$-group homomorphism $\phi:\Aut(W_{n,\mathbb{C}})\rightarrow \Aut(P_{n\mathbb{C}})$
as follows. Previously we had a morphism $f\mapsto f^c$, however as the example has shown it
explicitly depends on the choice of the infinite prime $[p]$. We may eliminate this dependence by
pushing the whole domain $\Aut(W_{n,\mathbb{C}})$ forward with a specific base field automorphism
$\gamma$, namely $\gamma=\Theta_{[p]}^{-1}$ - the field automorphism which is Steinitz-conjugate
with the inverse nonstandard Frobenius, and only then constructing the symplectomorphism
$f^c_{\Theta}$ as the restriction to the (nonstandard) center. For the subgroup of tame
automorphisms such as linear changes of variables this procedure has a simple meaning: just take
the $[p]$-th root of all coefficients $(a_{i,I})$ first. We thus obtain a group homomorphism which
preserves the filtration by degree and is in fact well-behaved with respect to the Zariski topology
on $\Aut$ (indeed, the filtration $\Aut^{N}\subset \Aut^{N+1}$ is given by Zariski-closed
embeddings). Formally, we have a proposition:

\begin{prop}
There is a system of morphisms
\begin{equation*}
\phi_{[p],N}:\Aut^{\leq N}(W_{n,\mathbb{C}})\rightarrow \Aut^{\leq N}(P_{n,\mathbb{C}}).
\end{equation*}
such that the following diagram commutes for all $N\leq N'$:
\begin{equation*}
\begin{tikzcd}
\Aut^{\leq N}(W_{n,\mathbb{C}}) \arrow{r}{\phi_{[p],N}} \arrow{d}{\mu_{NN'}} & \Aut^{\leq N}(P_{n,\mathbb{C}})\arrow{d}{\nu_{NN'}}\\
\Aut^{\leq N'}(W_{n,\mathbb{C}})\arrow{r}{\phi_{[p],N'}} & \Aut^{\leq N'}(P_{n,\mathbb{C}})
\end{tikzcd}
\end{equation*}
The corresponding direct limit of this system is given by $\phi_{[p]}$, which maps a Weyl algebra
automorphism $f$ to a symplectomorphism $f^c_{\Theta}$.
\end{prop}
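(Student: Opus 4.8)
The plan is to realize each $\phi_{[p],N}$ as the composite of three degree-preserving operations already isolated above --- the base-field push-forward $\gamma_*$ with $\gamma=\Theta_{[p]}^{-1}$, the extension of a bounded-degree endomorphism to the ultraproduct $\mathcal{A}_n(\mathcal{U},[p])$, and the restriction to the nonstandard center --- then to verify that the transition maps $\mu_{NN'}$ and $\nu_{NN'}$ intertwine these composites, and finally to invoke the universal property of $\varinjlim$ in the category of ind-groups to obtain $\phi_{[p]}$ as the direct limit. The only genuinely substantive point is a degree bound; everything else is functorial bookkeeping together with facts already established.

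Concretely, I would fix $f\in\Aut^{\leq N}(A_{n,\mathbb{C}})$ with coordinate array $(a_{i,I})$, $|I|\leq N$. Since a field automorphism does not alter degrees, $\gamma_*(f)$ and $\gamma_*(f^{-1})=\gamma_*(f)^{-1}$ again have all generator-images of degree $\leq N$, so $\gamma_*$ restricts to a group endomorphism of $\Aut^{\leq N}(A_{n,\mathbb{C}})$. Because an endomorphism of the Weyl algebra is cut out in the affine space of coordinate arrays by finitely many constraints of bounded order, and the identities $f\circ f^{-1}=f^{-1}\circ f=\mathrm{id}$ are themselves bounded-order polynomial identities over $\mathbb{Z}$ in the coordinates of $f$ and $f^{-1}$, applying $\alpha_{[p]}\circ\gamma$ to the coordinates of $f$ and viewing the result in $\mathcal{A}_n(\mathcal{U},[p])$ via the canonical embeddings yields an endomorphism $F$ which is in fact an automorphism, with $F^{-1}$ arising the same way from $\gamma_*(f^{-1})$. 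By the proposition identifying $C(\mathcal{A}_n(\mathcal{U},[p]))$ with $\bigl(\prod_m C(A_{n,\mathbb{F}_{p_m}})\bigr)/\mathcal{U}$, the automorphism $F$ preserves the center; its $m$-th component $f_m$ (an endomorphism of $A_{n,\mathbb{F}_{p_m}}$) sends $\xi_i$ to $f_m(\zeta_i)^{p_m}$, and passing to the commutative associated graded, where $(\sum c_\alpha\zeta^\alpha)^{p_m}=\sum c_\alpha^{p_m}\zeta^{p_m\alpha}$ and no top-degree cancellation occurs, shows $f_m(\zeta_i)^{p_m}$ is a polynomial in $\xi_1,\dots,\xi_{2n}$ of degree exactly $d_i:=\Deg f(\zeta_i)\leq N$, whose top-degree coefficients are the $a_{i,I}$ with $|I|=d_i$ --- the Frobenius twist by $\gamma=\Theta_{[p]}^{-1}$ precisely cancelling the $p$-th power introduced by restriction to the center, as in the linear example --- while the lower-order coefficients are polynomial expressions in the $a_{i,I}$, obtained from an integral lift of $f_m(\zeta_i)^{p_m}$ together with the defining formula $\lbrace a,b\rbrace=-\pi([a_0,b_0]/p)$. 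Applying the same estimate to $F^{-1}$, the restriction $f^c_\Theta:=F|_{\mathbb{C}[\xi_1,\dots,\xi_{2n}]}$ lies in $\Aut^{\leq N}(P_{n,\mathbb{C}})$, Poisson-preservation being exactly the statement recalled in the previous subsection; I set $\phi_{[p],N}(f)=f^c_\Theta$.

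It then remains to assemble the formal parts. Homomorphy of $\phi_{[p],N}$ is inherited from the functoriality of $\gamma_*$, of the extension $f\mapsto F$ (both being ``apply the same recipe to the coordinates''), and of restriction-to-center; the compatibility with the Zariski topology follows from the coordinate description above, the coefficients of $f^c_\Theta(\xi_i)$ being polynomial functions of the $(a_{i,I})$ --- with structure constants possibly depending on $[p]$, which is immaterial here. The square commutes for $N\leq N'$ tautologically: $\mu_{NN'}$ and $\nu_{NN'}$ are the defining Zariski-closed embeddings, and both ways around the square apply literally the same three-step construction to the same $f$, producing the same element $f^c_\Theta$. Finally, since $\Aut(A_{n,\mathbb{C}})=\varinjlim_N\Aut^{\leq N}(A_{n,\mathbb{C}})$ and $\Aut(P_{n,\mathbb{C}})=\varinjlim_N\Aut^{\leq N}(P_{n,\mathbb{C}})$ as ind-groups, the universal property delivers a unique ind-group homomorphism $\phi_{[p]}=\varinjlim_N\phi_{[p],N}$, which on a degree-$\leq N$ automorphism returns $f^c_\Theta$ by construction.

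The step I expect to be the crux is the degree bound together with the containment statement hidden inside it: that restricting a degree-$\leq N$ automorphism of $\mathcal{A}_n(\mathcal{U},[p])$ to the nonstandard center lands inside the polynomial subalgebra $\mathbb{C}[\xi_1,\dots,\xi_{2n}]$ --- which sits \emph{properly} inside the ultraproduct of centers --- with degree still $\leq N$, and likewise for the inverse, so that $f^c_\Theta$ is a genuine automorphism of $P_{n,\mathbb{C}}$ of bounded degree rather than merely an endomorphism of the larger object. Once this estimate and its parallel for $f^{-1}$ are in place, the push-forward, the extension by bounded-order constraints, the compatibility with $\mu_{NN'},\nu_{NN'}$, and the colimit are all formal, and Poisson-preservation together with the automorphism/symplectomorphism dictionary have already been supplied, so I would simply cite them.
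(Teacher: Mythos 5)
The paper states this proposition without a separate formal proof: it is offered as a formalization of the construction spelled out informally in the preceding paragraphs (push forward by $\gamma=\Theta_{[p]}^{-1}$, extend to the ultraproduct $\mathcal{A}_n(\mathcal{U},[p])$, restrict to the nonstandard center, and observe degree-preservation, compatibility with the Zariski-closed embeddings, and homomorphy). Your proof makes precise exactly that three-step recipe, supplies the degree bound via the associated graded and Frobenius (which also appears later in the paper's Lemma on dominant places), and then handles the diagram and the colimit formally --- so it is essentially the same approach, just written out.

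One small slip in phrasing, not a gap: the lower-order coefficients of $f^c_\Theta(\xi_i)$ are obtained simply by expanding $f_m(\zeta_i)^{p_m}$ as a polynomial in $\xi_1,\dots,\xi_{2n}$ (reordering via the commutation relations) and reading off coefficients; the formula $\lbrace a,b\rbrace = -\pi\bigl([a_0,b_0]/p\bigr)$ is not used in that expansion. It is the tool used to \emph{verify} that the restricted map preserves the Poisson bracket, which you correctly cite as a separately established fact. So the Poisson-bracket formula should be moved out of the sentence about computing coefficients and left only in the Poisson-preservation citation.
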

\medskip

The strengthened form of the Belov -- Kontsevich conjecture (Conjecture \ref{mainconj}) then
states:
\begin{conj}
$\phi_{[p]}$ is a group isomorphism.
\end{conj}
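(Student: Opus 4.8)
\medskip
We sketch a possible line of attack, indicating where the present results would fit.

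The plan is to split the conjecture into injectivity and surjectivity of $\phi_{[p]}$, exploiting the two reductions already in hand: $\phi_{[p]}$ carries the tame subgroup of $\Aut(A_{n,\mathbb{C}})$ isomorphically onto the tame subgroup of $\Aut(P_{n,\mathbb{C}})$, and --- the point of the present paper --- the whole map does not depend on $[p]$. Thus the only content left concerns automorphisms and symplectomorphisms that are not tame: one must show $\phi_{[p]}$ stays injective on all of $\Aut(A_{n,\mathbb{C}})$ and that its image is all of $\Aut(P_{n,\mathbb{C}})$. For $n=1$ Makar-Limanov's theorem collapses both tame subgroups onto the full groups, which is precisely why the conjecture is already a theorem there; for $n\ge 2$ the substance lies outside the tame part.

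For injectivity I would analyze the kernel reduction-by-reduction. An automorphism $f$ lies in $\ker\phi_{[p]}$ precisely when, for $\mathcal{U}$-almost all $m$, the reduced automorphism $f_m\in\Aut(A_{n,\mathbb{F}_{p_m}})$ (obtained after the $\Theta_{[p]}^{-1}$-twist and the bounded-degree extension) fixes the center $\mathbb{F}_{p_m}[\xi_1^{(m)},\dots,\xi_{2n}^{(m)}]$ pointwise. Since $A_{n,\mathbb{F}_{p_m}}$ is Azumaya of rank $p_m^{2n}$ over this polynomial center, a Skolem--Noether argument makes each $f_m$ inner (the obstruction to being globally inner lies in the Picard group of the polynomial center, which vanishes); the bounds on $f$ and $f^{-1}$ then constrain the conjugating units, and passing to the ultraproduct should force $f=\mathrm{id}$ in $\Aut(A_{n,\mathbb{C}})\simeq\Aut(A_{n,\overline{\mathbb{Z}_{[p]}}})$, using that the unit group of the complex Weyl algebra is trivial. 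I expect this to go through essentially along the lines of Tsuchimoto's $p$-curvature analysis --- injectivity of this map being morally known already --- the passage through the ultraproduct being the only delicate point. Note that its analogue for \emph{endomorphisms} is, by the Tsuchimoto / Kanel-Belov--Kontsevich stable-equivalence theorem, equivalent to the Dixmier conjecture, so one should not hope for that unconditionally; but for \emph{automorphisms} the inner-automorphism mechanism suffices.

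Surjectivity is where the real difficulty concentrates, and I see three routes. (i) \emph{Tame generation}: if $\Aut(P_{n,\mathbb{C}})$ were generated by linear symplectic maps together with Hamiltonian analogues of elementary transformations, the matching of tame subgroups under $\phi_{[p]}$ would finish the proof; this is a Jung--van der Kulk statement in dimension $\ge 2$. (ii) \emph{Quantization}: view $g\in\Aut(P_{n,\mathbb{C}})$ as a Poisson automorphism and lift it, order by order, to a bounded-degree automorphism of $\mathcal{A}_n(\mathcal{U},[p])$ restricting correctly to the center, the obstructions sitting in a second Poisson/Hochschild cohomology group attached to $\mathbb{A}^{2n}_{\mathbb{C}}$, which one would argue vanish. (iii) \emph{Ind-scheme}: from injectivity and the fact that $\phi_{[p]}$ respects the degree filtration by Zariski-closed embeddings, $\phi_{[p]}$ is a closed immersion of ind-groups; one then studies the induced map on tangent Lie algebras --- inner derivations of $A_{n,\mathbb{C}}$ against Hamiltonian vector fields on $\mathbb{A}^{2n}_{\mathbb{C}}$ --- and upgrades an isomorphism there to surjectivity of the group map by comparing the dimensions of $\Aut^{\le N}(A_{n,\mathbb{C}})$ and $\Aut^{\le N}(P_{n,\mathbb{C}})$ for every $N$, together with connectedness of the relevant components.

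The main obstacle is that each route terminates in an open problem. Tame generation for symplectomorphisms in dimension $\ge 2$ is unknown and, by analogy with the Shestakov--Umirbaev wildness of the Nagata automorphism, may well fail; the cohomological obstructions of the quantization route are not known to vanish; and the ind-scheme route reduces the conjecture to precisely the growth and dimension estimates for the ind-groups $\Aut^{\le N}$ advertised in the abstract, which are at present out of reach. A complete proof is therefore not available, and the contribution of the present paper --- that $\phi_{[p]}$ is a genuinely well-defined ind-group homomorphism, independent of the non-constructible datum $[p]$ --- should be regarded as removing a prerequisite obstruction for, rather than as completing, any of these programs.
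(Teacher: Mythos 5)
This statement is labeled a \textbf{Conjecture} in the paper, and the paper offers no proof of it. The authors state injectivity of $\phi_{[p]}$ as Theorem 2.5, citing \cite{4} for the argument, and explicitly say of surjectivity that it ``has proven itself to be a much tougher problem.'' The paper's main result (Theorem 3.1) is entirely conditional: \emph{assuming} surjectivity of $\phi_{[p],N}$ for all infinite primes, the comparison map $\Phi_N$ is the identity. So there is no ``paper's own proof'' to measure your attempt against, and you are right not to manufacture one; your text is an honest survey of plausible attack routes, each of which you correctly flag as terminating in an open problem.

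Two points of calibration worth making. First, you present the independence of $[p]$ as ``already in hand,'' but in this paper it is established only \emph{under} the hypothesis that $\phi_{[p]}$ is surjective for every $[p]$ (Theorem 3.1). For the tame subgroup the independence is unconditional, but for the full map it is not --- so independence cannot be treated as an unconditional prerequisite when attempting surjectivity without becoming circular. The paper's actual unconditional contributions toward the conjecture are: $\phi_{[p]}$ is a well-defined, injective, filtration-preserving ind-group homomorphism, and it is a morphism of varieties on each $\Aut^{\le N}$. Second, your Skolem--Noether/Azumaya sketch of injectivity is a reasonable heuristic but is redundant here: the paper simply cites an elementary proof from \cite{4}, and you need not re-derive it. With those caveats, your conclusion --- that the conjecture remains open, with the present paper removing a foundational well-definedness/rigidity obstacle rather than settling surjectivity --- matches the paper's own stance.
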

Injectivity may be established right away.
\begin{thm}
$\phi_{[p]}$ is an injective homomorphism.
\end{thm}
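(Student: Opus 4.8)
The plan is to exploit the fact that $\phi_{[p]}$ arises by restricting an automorphism-induced map of a large algebra to a distinguished central subalgebra, and that this construction is "reversible enough" to recover $f$ from $f^c_\Theta$. First I would reduce the problem to showing that the assignment $f \mapsto f^c$ (before the $\Theta_{[p]}^{-1}$-twist) is injective, since the twist by the base-field automorphism $\Theta_{[p]}^{-1}$ is manifestly invertible on the coefficient arrays and hence bijective on the ind-group. So it suffices to prove: if $f, g \in \Aut(A_{n,\mathbb{C}})$ induce the same symplectomorphism of $P_{n,\mathbb{C}} \simeq \mathbb{C}[\xi_1,\dots,\xi_{2n}]$ under restriction to the center of $\mathcal{A}_n(\mathcal{U},[p])$, then $f = g$.

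Next I would set $h = f \circ g^{-1} \in \Aut(A_{n,\mathbb{C}})$ and observe that it extends to an automorphism $\hat h$ of $\mathcal{A}_n(\mathcal{U},[p])$ of bounded degree whose restriction to the polynomial algebra $\mathbb{C}[\xi_1,\dots,\xi_{2n}]$ is the identity. The key geometric input is that in characteristic $p$, the Weyl algebra $A_{n,\mathbb{F}_p}$ is an Azumaya algebra of rank $p^{2n}$ over its center $\mathbb{F}_p[\xi_1,\dots,\xi_{2n}]$ — it is a sheaf of matrix algebras over the "$p$-twisted" affine space. An automorphism of $A_{n,\mathbb{F}_p}$ that fixes the center pointwise is therefore, fiberwise, an inner automorphism of a matrix algebra; globally it is conjugation by a unit of $A_{n,\mathbb{F}_p}$ (here one uses that $\Spec$ of the center is affine and $\mathrm{Pic}$ is trivial, so the Azumaya algebra is actually a matrix algebra and the Skolem–Noether theorem applies globally). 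Passing to the ultraproduct, $\hat h$ is conjugation by a unit $u \in \mathcal{A}_n(\mathcal{U},[p])^\times$. Since $\hat h$ is of bounded degree and restricts to an automorphism of the standard Weyl algebra $A_{n,\mathbb{C}} \subset \mathcal{A}_n(\mathcal{U},[p])$, one argues that the conjugating unit may be taken of bounded degree as well; but the only units of bounded degree in $\mathcal{A}_n$ that normalize $A_{n,\mathbb{C}}$ are (up to scalars, which act trivially by conjugation) the units of $A_{n,\mathbb{C}}$ itself, and $A_{n,\mathbb{C}}$ has no units other than nonzero constants. Hence $h = \mathrm{id}$, i.e. $f = g$.

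Alternatively, and perhaps more cleanly, one can run a direct argument: an automorphism $h$ of $A_{n,\mathbb{C}}$ with $h^c = \mathrm{id}$ must send each generator $\zeta_i$ to $\zeta_i$ plus correction terms that vanish after passing to the center in every characteristic $p_m$, and a degree/reduction analysis modulo $p_m$ shows these corrections are divisible by $p_m$ for $\mathcal{U}$-almost all $m$ while being of bounded degree and having lifts in $A_{n,\mathbb{Z}}$ — forcing them to be zero. I would carry this out by comparing the action of $h$ on $\zeta_i^{p_m}$ (which lands in the center and is fixed) with $(h(\zeta_i))^{p_m}$ and using the structure of $p$-th powers in $A_{n,\mathbb{F}_{p_m}}$.

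The main obstacle I expect is the globalization step: upgrading the fiberwise Skolem–Noether statement to a genuine global conjugation by a single unit, and then controlling the degree of that unit so as to descend back to $A_{n,\mathbb{C}}$. This is exactly the point where Tsuchimoto's $p$-curvature machinery and the Azumaya property of the Weyl algebra are doing real work, and where care is needed with the ultraproduct (one must check that "bounded degree" is preserved and that the relevant Picard/Brauer obstructions vanish uniformly in $m$ on an index set belonging to $\mathcal{U}$).
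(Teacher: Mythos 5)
The paper does not actually prove this theorem; it delegates to the citation \cite{4} (Kanel-Belov and Kontsevich, \emph{Automorphisms of Weyl algebras}), so there is no ``paper's own proof'' to compare against line by line. Evaluating your proposal on its merits:

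Your first route (Azumaya/Skolem--Noether) contains a genuine gap and also one incorrect intermediate claim, though the incorrect claim happens not to be fatal. The incorrect claim: $A_{n,\mathbb{F}_p}$ is \emph{not} a matrix algebra over its center $\mathbb{F}_p[\xi_1,\ldots,\xi_{2n}]$. Its Brauer class is nontrivial; at the generic point of the center it is a division algebra of degree $p^n$. The conclusion you want, namely that a central automorphism is inner, nevertheless holds -- but via the Rosenberg--Zelinsky exact sequence $1\to R^\times\to A^\times\to\Aut_R(A)\to\mathrm{Pic}(R)$, using only that $\mathrm{Pic}$ of the polynomial ring is trivial, not Morita-triviality. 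The genuine gap is the assertion that ``the conjugating unit may be taken of bounded degree as well.'' This is false in general: even for affine automorphisms of bounded degree, the conjugating unit grows with $p$. For instance, the translation $\zeta_1\mapsto\zeta_1+\lambda$ (degree $1$) is realised in $A_{n,\mathbb{F}_p}$ as conjugation by a truncated exponential of degree $p-1$. So boundedness of $\deg\hat h$ gives no control on $\deg u_{p_m}$ in the ultraproduct, and the passage from ``$\hat h$ is inner'' to ``$\hat h=\mathrm{id}$'' is unsupported. The final step about units normalizing $A_{n,\mathbb{C}}$ inside $\mathcal A_n(\mathcal U,[p])$ is also left entirely informal.

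Your second, direct approach is the right one and can be made to work cleanly -- you should promote it from an afterthought to the actual argument. After reducing to $h$ with $h^c=\mathrm{id}$: for $\mathcal U$-almost every $m$ one has $h_{p_m}(\zeta_i)^{p_m}=\zeta_i^{p_m}$ in $A_{n,\mathbb F_{p_m}}$. Since the associated graded of the Weyl algebra is a domain, degrees are multiplicative, so $p_m\cdot\Deg h_{p_m}(\zeta_i)=p_m$, forcing $\Deg h_{p_m}(\zeta_i)=1$ on an index set in $\mathcal U$. As $\Deg h$ is a fixed finite number, this forces $h$ to be affine. For affine $h$ one reads off the coefficients of $h$ directly from $h(\zeta_i)^{p_m}$ (leading term gives the linear part, constant term the translation), so $h^c=\mathrm{id}$ forces $h=\mathrm{id}$. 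This is the ``fairly elementary'' argument the paper alludes to; it avoids all the Azumaya machinery and, crucially, the unjustified degree bound on the conjugating unit.
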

(See \cite{4} for the fairly elementary proof).

\section{$\Phi$ is a morphism of normalized $\Ind$-schemes}
Let us at first assume that the Belov -- Kontsevich conjecture holds, with $\phi_{[p]}$ furnishing
the isomorphism between the automorphism groups. (This would be the case if all automorphisms in
$\Aut(W_{n,\mathbb{C}})$ were tame, which is unknown at the moment for $n>1$. See also \cite{K-BE4}
for the justification of the above assumption.)

Let $[p]$ and $[p']$ be two distinct classes of $\mathcal{U}$-congruent prime number sequences -
that is, two distinct infinite primes. We then have the following diagram:
\begin{equation*}
\begin{tikzcd}
\Aut(W_{n,\mathbb{C}}) \arrow{r}{\phi_{[p]}} \arrow{d}{isom} & \Aut(P_{n,\mathbb{C}})\arrow{d}{isom}\\
\Aut(W_{n,\mathbb{C}})\arrow{r}{\phi_{[p']}} & \Aut(P_{n,\mathbb{C}})
\end{tikzcd}
\end{equation*}
with all arrows being isomorphisms. Vertical isomorphisms answer to different presentations of
$\mathbb{C}$ as $\overline{\mathbb{Z}_{[p]}}$ and $\overline{\mathbb{Z}_{[p']}}$. The corresponding
automorphism $\mathbb{C}\rightarrow \overline{\mathbb{Z}_{[p]}}$ is denoted by $\alpha_{[p]}$ for
any $[p]$.
\smallskip

The fact that all the arrows in the diagram are isomorphisms allows one instead to consider a loop
morphism of the form
\begin{equation*}
\Phi:\Aut(W_{n,\mathbb{C}})\rightarrow \Aut(W_{n,\mathbb{C}}).
\end{equation*}
Furthermore, as it was noted in the previous section, the morphism $\Phi$ belongs to \\
$\Aut(\Aut(W_{n,\mathbb{C}}))$.




In order to prove the Main Theorem, we need to prove that $\Phi$ -- and its augmented analogue
$\Phi^h$ -- is a morphism of (normalized) $\Ind$-schemes. This is achieved by the following
Proposition.
\begin{prop}\label{propgabber}
For an arbitrary Weyl algebra automorphism $f\in\Aut^{\leq N}(W_{n,\mathbb{C}})$, the coordinates
of $\Phi_N(f)$ are given by algebraic functions of the coordinates of $f$.
\end{prop}
\begin{proof}
The statement of this Proposition is contained in Theorem 2 of \cite{4}; for the convenience of the
reader, we reproduce the proof with technical modifications in accordance with the context of this
section. The idea of utilizing local ad-nilpotency is originally due to O. Gabber.

 \smallskip

It suffices to demonstrate the Proposition as a property of $\phi_{[p]}$. More precisely, it is
enough to show that, given an automorphism $f_p$ of the Weyl algebra in positive characteristic $p$
with coordinates $(a_{i,I})$, its restriction to the center (a symplectomorphism) $f^c_p$ has
coordinates which are algebraic in $(a_{i,I}^p)$, with the additional property of being implicitly
defined by polunomial identities of degree independent of $p$ (so that one can return to
characteristic zero).

\smallskip

The switch to positive characteristic and back is performed for a fixed $f\in\Aut
(W_{n,\mathbb{C}})$ on an index subset $A_f\in\mathcal{U}$.

\smallskip

Let $f$ be an automorphism of $W_{n,\mathbb{C}}$ and let $N=\Deg f$ be its degree. The automorphism
$f$ is specified by its coordinates $a_{i,I}\in\mathbb{C}$, $i=1,\ldots,2n$, $I=\lbrace
i_1,\ldots,i_{2n}\rbrace$, obtained from the decomposition of algebra generators $\zeta_i$ in the
standard basis of the free module:
\begin{equation*}
f(\zeta_i)=\sum_{i,I}a_{i,I}\zeta^I,\;\;\zeta^I=\zeta_1^{i_1}\cdots\zeta_{2n}^{i_{2n}}.
\end{equation*}

Let $(a_{i,I,p})$ denote the class $\alpha_{[p]}(a_{i,I})$, $p=(p_m)$, and let $\lbrace
R_k(a_{i,I}\;|\;i,I)=0\rbrace_{k=1,\ldots,M}$ be a finite set of algebraic constraints for
coefficients $a_{i,I}$. Let us denote by $A_1,\ldots,A_M$ the index subsets from the ultrafilter
$\mathcal{U}$, such that $A_k$ is precisely the subset on whose indices the constraint $R_k$ is
valid for $(a_{i,I,p})$. Take $A_f=A_1\cap\ldots\cap A_M\in\mathcal{U}$ and for $p_m$, $m\in A_f$,
define an automorphism $f_{p_m}$ of the Weyl algebra in positive characteristic
$W_{n,\mathbb{F}_{p_m}}$ by setting
\begin{equation*}
f_{p_m}(\zeta_i)=\sum_{i,I}a_{i,I,p_m}\zeta^I.
\end{equation*}
All of the constraints are valid on $A_f$, so that $f$ corresponds to a class $[f_p]$ modulo
ultrafilter $\mathcal{U}$ of automorphisms in positive characteristic. The degree of every
$f_{p_m}$ ($m\in A_f$) is obviously less than or equal to $N=\Deg f$.


Now consider $f\in\Aut^{\leq N}(W_{n,\mathbb{C}})$ with the index subset $A_f$ over which its
defining constraints are valid. The automorphisms $f_{p_m}=f_p:W_{n,\mathbb{F}_p}\rightarrow
W_{n,\mathbb{F}_p}$ defined for $m\in A_f\in\mathcal{U}$ provide arrays of coordinates $a_{i,I,p}$.
Let us fix any valid $p_m=p$ and denote by $\mathit{F}_{p^k}$ a finite subfield of $\mathbb{F}_p$
which contains the respective coordinates $a_{i,I,p}$ (one may take $k$ to be equal to the maximum
degree of all minimal polynomials of elements $a_{i,I,p}$ which are algebraic over $\mathbb{Z}_p$).

\smallskip

Let $a_1,\ldots,a_s$ be the transcendence basis of the set of coordinates $a_{i,I,p}$ and let
$t_1,\ldots,t_s$ denote $s$ independent (commuting) variables. Consider the field of rational
functions:
\begin{equation*}
\mathit{F}_{p^k}(t_1,\ldots,t_s).
\end{equation*}
\medskip
The vector space
\begin{equation*}
\Der_{\mathbb{Z}_p}(\mathit{F}_{p^k}(t_1,\ldots,t_s),\mathit{F}_{p^k}(t_1,\ldots,t_s))
\end{equation*}
of all $\mathbb{Z}_p$-linear derivations of the field $\mathit{F}_{p^k}(t_1,\ldots,t_s)$ is
finite-dimensional with \\$\mathbb{Z}_p$-dimension equal to $ks$; a basis of this vector space is
given by elements $$\lbrace e_a\mathit{D}_{t_b}\;|\;a=1,\ldots, k,\;b=1,\ldots,s\rbrace$$ where
$e_a$ are basis vectors of the $\mathbb{Z}_p$-vector space  $\mathit{F}_{p^k}$, and
$\mathit{D}_{t_b}$ is the partial derivative with respect to the variable $t_b$.

\smallskip

Set $a_1,\ldots,a_s=t_1,\ldots,t_s$ (i.e. consider an $s$-parametric family of automorphisms), so
that the rest of the coefficients $a_{i,I,p}$ are algebraic functions of $s$ variables
$t_1,\ldots,t_s$. We need to show that the coordinates of the corresponding symplectomorphism
$f_p^c$ are annihilated by all derivations $e_a\mathit{D}_{t_b}$.

Let $\delta$ denote a derivation of the Weyl algebra induced by an arbitrary basis derivation
$e_a\mathit{D}_{t_b}$ of the coefficient field. For a given $i$, let us introduce the short-hand
notation
\begin{equation*}
a=f_p(\zeta_i),\;\;b=\delta(a).
\end{equation*}
\medskip
We need to prove that
\begin{equation*}
\delta(f^c(\xi_i))=\delta(f_p(\zeta_i^p))=0.
\end{equation*}
In our notation $\delta(f_p(\zeta_i^p))=\delta(a^p)$, so by Leibnitz rule we have:
\begin{equation*}
\delta(f_p(\zeta_i^p))=ba^{p-1}+aba^{p-2}+\cdots+a^{p-1}b.
\end{equation*}

Let $\ad_x:W_{n,\mathbb{F}_p}\rightarrow W_{n,\mathbb{F}_p}$ denote a $\mathbb{Z}_p$-derivation of
the Weyl algebra corresponding to the adjoint action (recall that all Weyl algebra derivations are
inner):
\begin{equation*}
\ad_x(y)=[x,y].
\end{equation*}
We will call an element $x\in W_{n,\mathbb{F}_p}$ \textbf{locally ad-nilpotent} if for any $y\in
W_{n,\mathbb{F}_p}$ there is an integer $D=D(y)$ such that
\begin{equation*}
\ad_x^D(y)=0.
\end{equation*}
All algebra generators $\zeta_i$ are locally ad-nilpotent. Indeed, one could take $D(y)=\Deg y + 1$
for every $\zeta_i$.

If $f$ is an automorphism of the Weyl algebra, then $f(\zeta_i)$ is also a locally ad-nilpotent
element for all $i=1,\ldots,2n$. That means that for any $i=1,\ldots,2n$ there is an integer $D\geq
N+1$ such that
\begin{equation*}
\ad_{f_p(\zeta_i)}^D(\delta(f_p(\zeta_i)))=\ad_{a}^D(b)=0.
\end{equation*}

\smallskip

Now, for $p\geq D+1$ the previous expression may be rewritten as
\begin{equation*}
0=\ad_{a}^{p-1}(b)=\sum_{l=0}^{p-1}(-1)^l\binom{p-1}{l}a^lba^{p-1-l}\equiv\sum_{l=0}^{p-1}a^lba^{p-1-l}\;\;(\text{mod\;} p),
\end{equation*}
and this is exactly what we wanted.

\smallskip

We have thus demonstrated that for an arbitrary automorphism $f_p$ of the Weyl algebra in
characteristic $p$ the coordinates of the corresponding symplectomorphism $f_p^c$ are algebraic
functions in the $p$-th powers of the coordinates of $f_p$, provided that $p$ is greater than $\Deg
f_p + 1$.

As the sequence $(\Deg f_{p_m})$ is bounded from above by $N$ for all $m\in A_f$, we see that there
is an index subset $A^*_f\in\mathcal{U}$ such that the coordinates of the symplectomorphism
$f_{p_m}^c$ for $m\in A^*_f$ are algebraic functions in $p_m$-th powers of $a_{i,I,p_m}$.
Furthermore, the degree of these functions' defining polynomials (i.e. polynomials whose roots are
these algebraic functions) is also bounded from above, as $f_p$ is in $\Aut^{\leq N}$. Therefore,
the image $\Phi_N(f)$ of the characteristic zero automorphism will have coordinates algebraic in
the coefficients of $f$.

\end{proof}

\medskip

\begin{remark}
The nature of local ad-nilponency is not to be considered trivial. This is especially so in the
context of ultrafilter decomposition (modulo infinite prime reduction), for various propositions
involving the Weyl algebra in positive characteristic may well contain elements whose degree is
dependent on the prime $p$. This problem presents certain difficulties, sometimes of quite
substantial nature (one notable example being the Dixmier conjecture itself, or rather the
impossibility of its negation by a naive example in positive characteristic with subsequent
ultrafilter reconstruction).

For a more immediate illustration, consider the following elementary automorphism of
$W_{1,\mathbb{F}_p}$
$$
\varphi:\;x\mapsto x+ay^{p-1},\;\;y\mapsto y
$$
with $a$ constant. The rank of this automorphism grows as $p$, which prevents it from being
reconstructed in characteristic zero. Moreover, the problem becomes even more involved if one looks
at its image under the Kontsevich homomorphism: the action upon the central element $x^p$ will
result in the appearance of coefficients which will be (even after the Frobenius untwisting)
explicitly dependent on $p$. Also, thanks to the Weyl algebra commutation relations, the term
$ay^{p-1}x^{p-1}$ in the image of $x^p$ will contribute (in the standard ordering of generators) a
constant term $a$, so that the image looks as this:
$$
\varphi(x^p)=x^p+a^py^{p(p-1)}+a.
$$

This would have a prohibitive effect on the theorems on symplectomorphism approximation (as
developed in \cite{KGE}), were the latter to be applied naively.
\end{remark}

\medskip

Proposition \ref{propgabber} has the following important corollary.

\begin{cor}\label{corgabber}
Let $(\Aut^{\leq N}(W_{n,\mathbb{C}}))^{\nu}$ denote the normalization of the scheme \\
$\Aut^{\leq N}(W_{n,\mathbb{C}})$ (corresponding to the variety defined previously). Then the maps
$\Phi_N$ induce monomorphisms of normal schemes
$$
\Phi_N: (\Aut^{\leq N}(W_{n,\mathbb{C}}))^{\nu}\rightarrow (\Aut^{\leq N}(W_{n,\mathbb{C}}))^{\nu}.
$$
\end{cor}
\begin{proof}
This corollary is immediate since the coefficients of $\Phi_N(f)$ are algebraic in coordinates of
$f$ and are therefore in the coordinate ring of the normalized variety. Injectivity follows from
the assumption on $\phi_{[p]}$ at the beginning of the section.
\end{proof}

In view of Corollary \ref{corgabber}, it is sensible to amend the definitions of automorphism
$\Ind$-schemes we have given in the previous sections. From this point forward, by $\Aut^{\leq N}$
we mean normalized versions of varieties defined in the introduction (thus dropping the superscript
$\nu$ for the sake of notational convenience). The corresponding direct limits will therefore be
normal $\Ind$-schemes. Accordingly, the $\Ind$-map $\Phi$ (corresponding to the group homomorphism
$\Phi$) will always mean the system of morphisms of Corollary \ref{corgabber}.

\begin{remark}
It is possible to establish a stronger version of Proposition \ref{propgabber} (or rather, the
property of $\phi_{[p]}$ demonstrated in the proof) for the subschemes $\TAut$ of tame
automorphisms: the map $\phi_{[p]}$ is polynomial and indeed is an isomorphism (independent of the
choice of infinite prime, as proved in \cite{4}).
\end{remark}

\smallskip

\begin{remark}\label{remextended}
Once the fact that $\Phi$ is a morphism of normalized $\Ind$-schemes has been established, one may
investigate its properties in more detail. In particular (as in the extended version of this
section in \cite{K-BEadd}) one may demonstrate that $\Phi$ preserves asymptotic expansions of
curves in $\Aut$ -- a strong restriction on $\Phi$ from which one can, after some preparation,
derive the fact that $\Phi = \lbrace \Phi_N\rbrace$ is locally unipotent: for each $N$ there is a
power $k = k(N)$ such that
$$
\Phi_N^k =\Phi_N\circ\ldots\circ\Phi_N= \Id.
$$
The proof requires a rather lengthy detour from our main objective and is not reproduced here (we
will address the issue in the arXiv version of this paper, \cite{K-BEadd}).

The proof (of local unipotency and to a certain extent of asymptotic expansion preservation) bears
some resemblance -- at least in general terms -- to the singularity trick which is our method of
attack on the Main Theorem. Also, while the techniques developed in \cite{K-BEadd} seem to be
insufficient for the characterisation of all $\Ind$-automorphisms of $\Aut P_{n,\mathbb{C}}$, in
analogy with the results of \cite{KBYu}, the local unipotency of asymptotic expansion-preserving
$\Ind$-morphisms can be viewed as a property of some relevance to that program.
\end{remark}


The next section is dedicated to the proof of the Main Theorem.

\section{Independence of infinite prime: $\Phi$ in the augmented case}

\subsection{Augmented algebras and the loop morphism}

\subsubsection{Augmented and skew augmented algebras}
In order to conclude the proof of the Main Theorem, we proceed along the way of a certain
topological argument. To do so, we modify our setting, passing from the initial Weyl algebra $W_{n,
\mathbb{C}}$ to another associative unital algebra $W^h_{n, \mathbb{C}}$, which is defined as the
quotient of the free algebra on $(2n+1)$ indeterminates $\mathbb{C}\langle a_1,\ldots, a_n,
b_1,\ldots, b_n, c\rangle$ by the two-sided ideal generated by elements
$$
a_ia_j-a_ja_i,\;\;b_ib_j-b_jb_i\;\;,
b_ia_j-a_jb_i-\delta_{ij}c,\;\;
a_ic-ca_i,\;\;
b_ic-cb_i.
$$
We will denote the images of $a_i$, $b_i$ and $c$ in the quotient algebra by $x_i$, $d_i$ and $h$,
respectively. The generators of $W^h_n$ are subject to the commutation relations which, at least
with respect to the first two sets of generators ($x_i$ and $d_i$), are similar in structure to
those of the Weyl algebra $W_n$, however the new generator $h$ is also involved. This construction
mirrors the familiar commutation relations
$$
[d_i, x_j]=\hbar \delta_{ij}
$$
arising in the setting of deformation quantization.

\smallskip

Accordingly, the commutative Poisson algebra $P_{n,\mathbb{C}}$ is similarly modified, by adding
$h$ and setting $\lbrace p_i, x_j\rbrace = h\delta_{ij}$. This is the classical counterpart of
$W^h_n$, which we denote by $P^h_n$.

\smallskip
The idea of passing to augmented algebras plays a key role in the proof of Conjecture
\ref{mainconj} in our most recent paper \cite{K-BE4}.

\smallskip
It can be verified that these new algebras behave in a way almost identical to the one we described
in the previous sections; in particular, the notions of tame automorphism, tame (modified)
symplectomorphism and homomorphisms
\begin{equation*}
\phi_{[p],N}^{h}:\Aut^{\leq N}(W^h_{n,\mathbb{C}})\rightarrow \Aut^{\leq N}(P^h_{n,\mathbb{C}}).
\end{equation*}
which is identical on the tame points, are present. Note also that in the algebra
$W^h_{n,\mathbb{C}}$, the variable $h$ is central.

\smallskip

The loop morphism

$$
\Phi^h:\Aut(W^h_{n,\mathbb{C}})\rightarrow \Aut(W^h_{n,\mathbb{C}})
$$
(or rather, its normalized version) is then constructed, and a similar problem of verifying it to
be the identity map is posed. It is somewhat easier to deal with the Poisson algebra; the Poisson
counterpart of $\Phi^h$ is the $\Ind$-morphism
$$
\Phi_s^h:\Aut(P^h_{n,\mathbb{C}})\rightarrow \Aut(P^h_{n,\mathbb{C}})
$$
which can be obtained from $\Phi^h$ by interchanging the infinite primes $[p]$ and $[p']$ defining
it.

Our main objective is the proof of the following Proposition.

\begin{prop}\label{mainprop}
$\Phi_s^h$ is the identity map.
\end{prop}

In order to prove Proposition \ref{mainprop}, we will have to distort the augmentation further by
introducing a pair of auxiliary (or skew, as we at times call them) algebras
$W_{n,\mathbb{C}}^h[k_{ij}]$ and $P_{n,\mathbb{C}}^h[k_{ij}]$ (which correspond to augmented Weyl
and Poisson algebras, respectively).

These are defined as follows. Let the augmented Poisson generators be denoted by $\xi_i$ with
$1\leq i\leq 2n$, which we will call the main generators, (the passage from $x_i$ and $p_j$ to
$\xi_i$ is made for the sake of uniformity of notation), and let $[k_{ij}]$ be a skew-symmetric
array (a skew matrix) of central variables. The algebra $P_{n,\mathbb{C}}^h[k_{ij}]$ is generated
by $2n$ commuting variables $\xi_i$, the augmentation variable $h$ and the variables $[k_{ij}]$
(thus being the polynomial algebra in these variables); the Poisson bracket is defined on the
generators $\xi_i$:
$$
\lbrace \xi_i,\xi_j\rbrace = hk_{ij}.
$$
The bracket of any element with $h$ or with any of the $k_{ij}$ is zero. \footnote{Thus, the
augmented Poisson algebra $P^h_{n,\mathbb{C}}$ results from the new algebra
$P_{n,\mathbb{C}}^h[k_{ij}]$ by specialization of the matrix $[k_{ij}]$ to the standard symplectic
form.}

The skew version of the algebra $W_{n,\mathbb{C}}$ is defined analogously.

\smallskip

It is easily seen that the new algebras essentially share the positive-characteristic properties
with $W_n$ and $P_n$, from which it follows that a mapping
$$
\phi^{hk}_{[p]}:\Aut W_{n,\mathbb{C}}^h[k_{ij}]\rightarrow \Aut P_{n,\mathbb{C}}^h[k_{ij}]
$$
analogous to $\phi_{[p]}$ and $\phi^h_{[p]}$ can be defined for every infinite prime $[p]$. In a
manner identical to Section 3 it can be established that this mapping consists of a system of
morphisms of the normalized varieties $\Aut^{\leq N}$.

The grading according to the total degree is specified by $$\Deg h = 0,$$ $$\Deg k_{ij} = 2,$$
$$\Deg \xi_i = 1,$$ although since $h$ and $k_{ij}$ appear as products in the commutation
relations, one could assign degree two to the deformation parameter $h$ and degree zero to the
skew-form variables $k_{ij}$, in analogy with the case of augmented algebra $P^h_n$, while
essentially preserving the $\Ind$-scheme structure of $\Aut$.

\smallskip

We then introduce the (augmented) power series topology by the metric
$$
\rho(\varphi, \psi) = \exp(-\Ht(\varphi-\psi))
$$
where
$$
\varphi-\psi = (\varphi(\xi_1)-\psi(\xi_1),\ldots,\varphi(\xi_{2n})-\psi(\xi_{2n}),\ldots)
$$
is the algebra endomorphism defined by its images (on $\xi_i$, $h$ and $k_{ij}$), and the
\textbf{height} $\Ht(\varphi)$ of an endomorphism is defined as the minimal total degree $m$ such
that in one of the generator images under $\varphi$ a non-zero homogeneous component of degree $m$
exists.

Symbolically, we say that the power series topology is defined via the augmentation ideal $I$
$$
I=(\xi_1,\ldots, \xi_{2n}, h, \lbrace k_{ij}\rbrace)
$$
just as it is so in the commutative case, when every variable carries degree one.

The system of neighborhoods $\lbrace H_N\rbrace$ of the identity automorphism in $\Aut
P_{n,\mathbb{C}}^h[k_{ij}]$ is defined by setting
$$
H_N = \lbrace g\in \Aut P_{n,\mathbb{C}}^h[k_{ij}]\;:\; g(\eta) \equiv \eta \;(\text{mod}\;I^N)\rbrace
$$
(here $\eta$ denotes any generator in the set $\lbrace \xi_1,\ldots, \xi_{2n},h, k_{ij}\rbrace$, so
that elements of $H_N$ are precisely those automorphisms which are identity modulo terms which lie
in $I^N$; again, the phrase "mod $I^N$ " is short-hand for the distance as defined above).


Similar notions of grading, topology, and system of standard neighborhoods of a point, are valid
for the algebra $W_{n,\mathbb{C}}^h[k_{ij}]$ (once the proper ordering of the generators in the
chosen set is fixed).

\smallskip

The point of introducing the skew algebras $W_{n,\mathbb{C}}^h[k_{ij}]$ and
$P_{n,\mathbb{C}}^h[k_{ij}]$ is that a certain singularity analysis procedure (the singularity
tricks mentioned in the introduction) can be implemented for these algebras in full analogy with
the case of the commutative polynomial algebra processed in our preceding study \cite{KBYu}, while
on the other hand there seems to be no straightforward way to demonstrate the singularity trick for
the algebras $W^h_n$ and $P^h_n$.

\smallskip

The algebra $P^h_{n,\mathbb{C}}$ can be connected to its skew counterpart
$P_{n,\mathbb{C}}^h[k_{ij}]$ via localization of the variables $k_{ij}$. The argument is as
follows. One may extend the coefficient ring (which is $\mathbb{C}$) by adjoining the entries
$k'_{lm}$ of the inverse matrix $[k_{ij}]^{-1}$ (together with $k_{ij}$ themselves). In the new
algebra, $P_{n,\mathbb{C}[k_{ij}, k'_{lm}]}^h[k_{ij}]$, a $\mathbb{C}[k_{ij}, k'_{lm}]$-linear
transformation of the generators $\xi_i$ yields a new set of generators $x_i$ and $y_i$
($i=1,\ldots, n$), whose commutation relations are given by
$$
\lbrace y_i, x_i\rbrace = h.
$$
In other words, the localized algebra $P_{n,\mathbb{C}[k_{ij}, k'_{lm}]}^h[k_{ij}]$ is isomorphic
to the algebra $P^h_{n,\mathbb{C}[k_{ij}, k'_{lm}]}$ which is just the extension by  $k_{ij},
k'_{lm}$ of the augmented Poisson algebra $P^h_{n,\mathbb{C}}$.

Given an endomorphism $\varphi$ of the algebra $P_{n,\mathbb{C}}^h[k_{ij}]$ such that
$\varphi(k_{ij})$ is a $\mathbb{C}$-linear combination of the variables $k_{ij}$, its image under
the localization is constructed in an obvious fashion. As the localized skew algebra carries the
structure of the (augmented) Poisson algebra, one may formulate the skew analog of Conjecture
\ref{mainconj} for the subgroups of automorphisms of the skew algebras which act linearly on
$[k_{ij}]$. In fact, it is due to the possibility of the singularity trick that one can prove this
conjecture. We therefore state this result as a theorem.
\begin{thm}\label{mainconjskew}
Let $\Aut_{k}P_{n,\mathbb{C}}^h[k_{ij}]$ and $\Aut_{k}W_{n,\mathbb{C}}^h[k_{ij}]$ denote the
automorphism subgroups of the skew Poisson and Weyl algebras consisting of those automorphisms that
map $k_{ij}$ to $\mathbb{C}$-linear combinations of $k_{ij}$. Then the mapping
$$
\phi^{hk}_{[p]}:\Aut_{k}W_{n,\mathbb{C}}^h[k_{ij}]\rightarrow \Aut_{k}P_{n,\mathbb{C}}^h[k_{ij}]
$$
defined in full analogy\footnote{Thus, the isomorphism is the restriction of $\phi^{hk}_{[p]}$
defined above; it is easy to see that the $\phi^{hk}_{[p]}$ maps elements of
$\Aut_{k}W_{n,\mathbb{C}}^h[k_{ij}]$ to elements of $\Aut_{k}P_{n,\mathbb{C}}^h[k_{ij}]$.} with the
homomorphism $\phi_{[p]}$ of non-augmented algebras is an isomorphism.
\end{thm}

Given this result, one may define, for any two infinite primes $[p]$ and $[p']$, the loop morphisms
$\Phi^{hk}$ (skew Weyl) and $\Phi_s^{hk}$ (skew Poisson). In the broadest terms, the proof of the
Main Theorem on the independence of infinite prime -- and, therefore, the proof of Proposition
\ref{mainprop} -- reduces, thanks to the connection of the skew algebras with the Weyl and Poisson
algebras and the specialization argument of the next subsection, to the verification of the
statement that these morphisms are always the identity morphism. The latter result, in turn, will
be demonstrated to be the immediate consequence of the continuity of the loop morphism in the power
series topology, together with the tame approximation (over the polynomial ring $\mathbb{C}[\lbrace
k_{ij}\rbrace]$) in the localized $h$-augmented Poisson subalgebra and the fact that the loop
morphism is point-wise stable on tame elements.

\smallskip

In order to arrive at the objective, we set up the singularity trick as follows. Consider the
algebra $P_{n+1,\mathbb{C}}^h[k_{ij}]$ with $(2n+2)$ main generators $\lbrace \xi_1,\ldots,
\xi_{2n}, u,v\rbrace$. Let
$$
\Aut_{u,v,k} P_{n+1,\mathbb{C}}^h[k_{ij}]
$$
denote the set of all automorphisms $\varphi$ of $P_{n+1,\mathbb{C}}^h[k_{ij}]$ such that:

1. $\varphi(\xi_i) = \xi_i + S_i$, where $S_i$ is a polynomial (in $\xi_i$, $u$, $v$, $h$ and
$k_{ij}$) such that its height with respect to $\lbrace \xi_1,\ldots, \xi_{2n}, u,v\rbrace$ is at
least two.

2. $\varphi(u) = u$, $\varphi(v) = v$.

3. $\varphi(k_{ij})$ is a $\mathbb{C}$-linear combination of $k_{ij}$, i.e.
$\varphi\in\Aut_{k}P_{n+1,\mathbb{C}}^h[k_{ij}]$.

Define the grading as in the prequel: $\xi_i$, $u$, $v$ carry degree one, $h$ carries degree zero,
and $k_{ij}$ carry degree two.

Denote by $H_N^{u,v,k}$ the subgroups of $\Aut_{u,v,k} P_{n+1,\mathbb{C}}^h[k_{ij}]$ consisting of
elements which are the identity map modulo terms of height $N$ with respect to the grading defined
above.

\smallskip

The purpose of the singularity trick set up below is the proof of the following result, which
establishes continuity of the loop morphism in the power series topology.

\begin{prop}\label{skewthetaprop}
If $\Phi_s^{hk}$ is the morphism defined as above for any pair of infinite primes $[p]$, $[p']$,
then
$$
\Phi_s^{hk}(H_N^{u,v,k}) = H_N^{u,v,k}.
$$
\end{prop}

The singularity trick is essentially a criterion for an automorphism $\varphi$ to be an element of
$H_N^{u,v,k}$, expressed in terms of asymptotic behavior of certain parametric families associated
to it. The parametric families of automorphisms are constructed from $\varphi$ by conjugating it
with $\mathbb{C}$-linear changes of the main generators (the latter are given by the set $\lbrace
\xi_1,\ldots, \xi_{2n}\rbrace$). Such parameterized variable changes are given by $(2n+2)$ by
$(2n+2)$ matrices $\Lambda(t)$ with
$$
(\xi_1,\ldots, \xi_{2n},u,v)\mapsto (\xi_1,\ldots, \xi_{2n},u,v)\Lambda(t)
$$
representing the action (such transformations of the main generators induce appropriate mappings of
$[k_{ij}]$). Note that if $\varphi$ is in $H_N^{u,v,k}$, then the conjugation by $\Lambda(t)$ is
also in $H_N^{u,v,k}$, as the action upon $u$ and $v$ is that of $\Lambda(t)\circ \Lambda(t)^{-1}$.

\smallskip

We are going to examine the behavior of such one-parameter families near singularities of
$\Lambda(t)$.

Suppose that, as $t$ tends to zero, the $i$-th eigenvalue of $\Lambda(t)$ also tends to zero as $t^{m_i}$, $m_i\in\mathbb{N}$. 

Let $\lbrace m_i,\;i=1,\ldots 2n+2\rbrace$ be the set of degrees of singularity of eigenvalues of
$\Lambda(t)$ at zero. Suppose that for every pair $(i,j)$ the following holds: if $m_i\neq m_j$,
then there exists a positive integer $M$ such that
$$
\text{either\;\;} m_iM\leq m_j\;\;\text{or\;\;}m_jM\leq m_i.
$$
We will call the largest such $M$ the \textbf{order} of $\Lambda(t)$ at $t=0$. As $m_i$ are all set
to be positive integer, the order equals the integer part of
$\frac{m_{\text{max}}}{m_{\text{min}}}$.

We now formulate the criterion
\begin{prop}[Singularity trick]\label{singtrick}
An element $\varphi\in\Aut_{u,v,k} P_{n+1,\mathbb{C}}^h[k_{ij}]$ belongs to $H_N^{u,v,k}$ if and
only if for every linear matrix curve $\Lambda(t)$ of order $\leq N$ the curve
$$
\Lambda(t)\circ\varphi\circ\Lambda(t)^{-1}
$$
does not have a singularity (a pole) at $t=0$.
\end{prop}
\begin{proof}
Suppose $\varphi\in H_N^{u,v,k}$ and fix a one-parametric family $\Lambda(t)$. Without loss of
generality, we may assume that the first $2n$ main generators $\lbrace \xi_1,\ldots,
\xi_{2n}\rbrace$ correspond to eigenvectors of $\Lambda(t)$. If $\xi_i$ denotes any of these main
generators, then the action of $\Lambda(t)\circ\varphi\circ \Lambda(t)^{-1}$ upon it reads
$$
\Lambda(t)\circ\varphi\circ \Lambda(t)^{-1}(\xi_i) = \xi_i + t^{-m_i}\sum_{l_1+\cdots+l_{2n} = N}a_{l_1\ldots l_{2n}}t^{m_1l_1+\cdots+m_{2n}l_{2n}}P_{i}(\xi_1,\ldots, \xi_{2n},h,k_{ij}) + S_i
$$
where $P_i$ is homogeneous of total degree $N$ (in the previously defined grading) and the height
of $S_i$ is greater than $N$. One sees that for any choice of $l_1,\ldots,l_{2n}$ in the sum, the
expression
$$
m_1l_1+\cdots+m_{2n}l_{2n} - m_i\geq m_{\text{min}}\sum l_j-m_i=m_{\text{min}}N-m_i\geq 0,
$$
so whenever $t$ goes to zero, the coefficient will not go to infinity. The same argument applies to
higher-degree monomials within $S_i$.

The other direction is established by contraposition. Assuming $\varphi\notin H_N^{u,v,k}$, we need
to prove the existence of linear curves with suitable eigenvalue behavior near $t=0$ which create
singularities via conjugation with the given automorphism.

Suppose first that the image of $\xi_1$ under $\varphi$ possesses a monomial which is not divisible
by $\xi_1$ or any $k_{1j}$ ($j\neq 1$). Then one can take $m_1$ and $m_2<m_1$ such that
$$
(N+1)m_2\geq m_1\geq Nm_2
$$
and set the curve $\Lambda(t)$ to be given by a diagonal matrix with entries
$t^{m_1},\;t^{m_2},\;t^{m_2},\ldots$. It is easily checked that conjugation of $\varphi$ by this
curve creates a pole at the coefficient of the chosen monomial.

The general case can be reduced to this special case by means of transformations of the form
($\lambda$ and $\delta$ are suitable constants)
\begin{gather*}
\xi_1\mapsto \xi_1 + \lambda u + \delta v,\\
k_{ij}\mapsto k_{ij},\;\;1<i,j\leq 2n,\\
k_{1j}\mapsto k_{1j} + \lambda k_{2n+1,j} + \delta k_{2n+2,j},\\
k_{1,2n+1}\mapsto k_{1,2n+1} + \delta k_{2n+2,2n+1},\\
k_{1,2n+2}\mapsto k_{1,2n+2} + \lambda k_{2n+1,2n+2}.
\end{gather*}
Conjugation with these transformations create in the image of $\xi_1$ under the resulting
automorphism a monomial from the previous case. In order to obtain the curve $\Lambda(t)$ from the
diagonal curve acting on the conjugated automorphism, one needs only conjugate it with the inverse
of the above transform. The singularity trick is proved.

\end{proof}

\begin{remark}\label{remarkstable}
This version of the singularity trick differs slightly from the original result (Lemma 3.6 of
\cite{KBYu}) -- namely, we have fixed a pair of main variables and consider only subgroups of
automorphisms stable on these variables. This is needed to eliminate from our consideration those
automorphisms whose linear part is a homothety (the proof of this general case seems to be
considerably more complicated) as well as to guarantee the existence of the transformations in the
proof above. Furthermore, if we were to state the singularity trick in its original form as in
\cite{KBYu}, we would have to resolve the following issue: there exist polynomial endomorphisms
whose higher-degree components are arranged in such a way that our method of conjugation with
linear variable changes would not produce the monomial we sought due to cancelling out of terms.
These pathological cases are never automorphisms, however, as one can see by evaluating the
Jacobian, which will be in these cases a non-trivial polynomial. In our case, the necessity of
processing such pathologies is negated by the introduction of the stable variables and the
existence of elementary transformations as above.

Conceptually the stable case is similar to the usual case of algebra over $2n$ main generators,
although in the proof of the main proposition we will have to separate the $(u,v)$-plane from the
rest of the variables, thus slightly complicating the extension of the coefficient ring.
\end{remark}

The implementation of the singularity trick in the proof of Proposition \ref{skewthetaprop}
requires the following general fact.
\begin{lem} \label{lem1} Let
$$
\Phi: X\rightarrow Y
$$
be a morphism of affine algebraic sets, and let $\varphi(t)$ be a curve (more simply, a
one-parameter family of points) in $X$. Suppose that $\varphi(t)$ does not tend to infinity as
$t\rightarrow 0$. Then the image $\Phi \varphi(t)$ under $\Phi$ also does not tend to infinity as
$t\rightarrow 0$.
\end{lem}
The proof of the Lemma is an easy exercise and is left to the reader.

Proposition \ref{skewthetaprop} is now an elementary consequence of the above Lemma together with
the singularity trick. As $\Phi_s^{hk}$ is bijective thanks to Theorem \ref{mainconjskew}, it
suffices to demonstrate that
$$
\Phi_s^{hk}(H_N^{u,v,k}) \subseteq H_N^{u,v,k}
$$
for all $N$.

Let us assume the contrary -- i.e. that for some $N$
$$
\Phi_s^{hk}(H_N^{u,v,k}) \nsubseteq H_N^{u,v,k}.
$$
Then there exists an element $\varphi\in H_N^{u,v,k}$ such that its image
$\Phi_s^{hk}(\varphi)\notin H_N^{u,v,k}$. By Proposition \ref{singtrick}, there is a linear
automorphism (matrix) curve $\Lambda(t)$ of order $\leq N$ such that the curve
$$
\Lambda(t)\circ \Phi_s^{hk}(\varphi)\circ \Lambda(t)^{-1}
$$
has a pole at $t=0$. Since $\Phi_s^{hk}$ is point-wise stable on linear variable changes, the
latter curve is the image under $\Phi_s^{hk}$ of the curve
$$
\Lambda(t)\circ \varphi\circ \Lambda(t)^{-1}.
$$
By our assumption, $\varphi\in H_N^{u,v,k}$; therefore, by Proposition \ref{singtrick}, the curve
above has no singularity at $t=0$. But then the statement that the curve $$\Lambda(t)\circ
\Phi_s^{hk}(\varphi)\circ \Lambda(t)^{-1}$$ -- which is the image of the former curve under the
morphism $\Phi_s^{hk}$ -- has a singularity at $t=0$ yields a contradiction with Lemma \ref{lem1}.
Proposition \ref{skewthetaprop} is proved.


\subsubsection{The $h$-augmented loop morphism is the identity map}

We now apply the developed theory to the proof of the main Proposition \ref{mainprop}. The plan is
as follows. Starting with an arbitrary automorphism $\varphi$ of the augmented Poisson algebra (we
need to show that $\Phi_s^h(\varphi) = \varphi$), we realize it as an automorphism of an algebra
(with the added stable variables $u$ and $v$) over a suitable polynomial ring; that algebra will be
isomorphic to an appropriate localization -- in the sense outlined above, i.e. an extension of the
coefficient ring by addition of entries of the matrix $[k_{ij}]^{-1}$ as well as elements
$k_{ij}^{-1}$ -- of the skew Poisson algebra. We will conjugate this automorphism of the
localization by an appropriate mapping (on which the loop morphism is stable) in order to get rid
of the negative powers of $k_{ij}$; the resulting map will therefore correspond to an automorphism
of the skew Poisson $\mathbb{C}$-algebra. As an automorphism of the $h$-augmented Poisson algebra
over the ring $\mathbb{C}[\lbrace hk_{ij}\rbrace]$, it admits a tame sequence converging to it: the
proof is a straightforward modification of the argument of \cite{KGE}. The sequence and its limit
will thus be realized as a convergent sequence of automorphisms of the localized skew algebra, at
which point another conjugation will be required to dispose of the denominators in the coefficients
of elements of the approximating sequence. We thus obtain a sequence of tame automorphisms of the
skew Poisson algebra which converges to a limit which in turn is a conjugation of the initial
automorphism by a homothety-type transform. The application of Proposition \ref{skewthetaprop},
together with the obvious fact that the loop morphism maps sequence elements to themselves (due to
the fact that the latter are tame automorphisms), will conclude the proof.

We now proceed with the details. Suppose given an automorphism $\varphi\in \Aut P^h_{n,\mathbb{C}}$
of the $h$-augmented Poisson algebra. Without loss of generality, we may assume that the linear
part of $\varphi$ is the identity matrix: indeed, one can compose $\varphi$ with tame automorphisms
(tame approximation of automorphisms of $P^h_{n,\mathbb{C}}$ is valid according to an argument
similar to that of \cite{KGE}, as was mentioned previously), so that the linear part of the
resulting automorphism is the identity map; also the loop morphism $\Phi_s^h$ is point-wise stable
on tame automorphisms.

We add two more $h$-Poisson variables (and lift $\varphi$ to an automorphism of the new algebra by
demanding it be stable on the new generators) and, correspondingly, consider the skew Poisson
version -- the algebra $P_{n+1,\mathbb{C}}^h[k_{ij}]$ with the last two variables denoted by $u$
and $v$. Our immediate goal is to realise the algebra $P^h_{n,\mathbb{C}}$ as a subalgebra in the
appropriately localized version of $P_{n+1,\mathbb{C}}^h[k_{ij}]$. To that end, we consider the
algebra $P_{n+1,\mathbb{C}}^h[k_{ij}]$  and transform the main generators
$$
\lbrace \xi_1,\ldots, \xi_{2n},u,v\rbrace
$$
to
$$
\lbrace x_1,\ldots, x_{2n},u,v\rbrace
$$
with $\lbrace x_i, u\rbrace = 0$ and $\lbrace x_i, v\rbrace = 0$. The change of the generating set
is required to properly define the action of $\varphi$, so that it will be an automorphism and will
be in agreement with the conditions of Proposition \ref{skewthetaprop}. The variable change is done
according to
$$
x_i = \xi_i - \alpha_i u - \beta_i v
$$
with $\alpha_i = k_{i,2n+2}k_{2n+1,2n+2}^{-1}$ and $\beta_i = - k_{i,2n+1}k_{2n+1,2n+2}^{-1}$ for
$i=1,\ldots, 2n$. We extend the coefficient ring by adding the necessary variables. The new
generators $\lbrace x_1,\ldots, x_{2n}\rbrace$ commute according to
\begin{gather*}
\lbrace x_i, x_j\rbrace = h(k_{ij} - \alpha_jk_{i,2n+1}+\alpha_ik_{j,2n+1}-\beta_jk_{i,2n+2}+\\
\beta_ik_{j,2n+2}+ (\alpha_i\beta_j - \alpha_j\beta_i)k_{2n+1,2n+2}) = h\tilde{k}_{ij}.
\end{gather*}
Note that the new commutation relation matrix, which we denote by $[\tilde{k}_{ij}]$\footnote{We
exclude $u$ and $v$, so that $i$ and $j$ run from $1$ to $2n$.}, is again skew-symmetric, and that
its entries are $\mathbb{C}$-polynomial in the entries of the initial matrix and their inverses.

We now reduce the matrix $[\tilde{k}_{ij}]$ to the standard form (corresponding to the algebra
$P^h_n$) by transforming $\lbrace x_1,\ldots, x_{2n}\rbrace$ to \\$\lbrace q_1,\ldots,
q_n,p_1,\ldots, p_n\rbrace$ with
$$
\lbrace p_i, q_j \rbrace = h\delta_{ij}.
$$
The new variables $p_i$ and $q_j$ are expressed as linear combinations of $x_1,\ldots, x_{2n}$ with
coefficients in the appropriate polynomial ring.

The algebra $P^h_{n,\mathbb{C}}$ is therefore a subalgebra of the algebra generated by $$\lbrace
q_1,\ldots, q_n,p_1,\ldots, p_n, u,v\rbrace$$ (together with $h$ as the augmentation variable), as
the Poisson bracket takes its proper form after the standard form reduction, while $\mathbb{C}$ is
a subring of the coefficient ring.

We extend our automorphism $\varphi$ to act on this algebra: on $p_i$ and $q_j$ its action is given
by definition, and we impose $\varphi(u) = u$, $\varphi(v) = v$ and $\varphi(k_{ij}) = k_{ij}$.
Thus, starting from $\varphi$ we have arrived at an automorphism $\bar{\varphi}$ of the localized
skew Poisson algebra.

With respect to the initial generator set  $\lbrace \xi_1,\ldots, \xi_{2n},u,v\rbrace$ this
automorphism is generally not polynomial in $k_{ij}$, although it always will be polynomial in $h$.
In order to construct from it an automorphism of the skew algebra, we need to get rid of the
denominators first. This is accomplished by the following lemma.
\begin{lem}\label{conjugationlemma}
For every $\bar{\varphi}$ constructed as above, there is a polynomial $P$ in $k_{ij}$, such that
conjugation of $\bar{\varphi}$ with the transformation
$$
(\xi_1,\ldots, \xi_{2n},u,v)\mapsto (P\xi_1,\ldots, P\xi_{2n},Pu,Pv),\;\;h\mapsto P^2h
$$
is polynomial in $k_{ij}$.
\end{lem}
The proof of this Lemma is left to the interested reader, although it is fairly obvious why it must
be true. Indeed, the denominators in the expression for $\bar{\varphi}$ are polynomial in $k_{ij}$
coming from the separation of the $(u,v)$-plane and the standard form reduction (at which point the
determinant of $[\tilde{k}_{ij}]$ makes its contribution). One can therefore find appropriate
$P(k_{ij})$ to cancel these denominators. Furthermore, the polynomial $P$ depends only on the two
generator systems (or rather, on the transformation matrix between those).

\smallskip

We denote the result of the conjugation of Lemma \ref{conjugationlemma} by $\varphi^P$. The images
of the main generators (both in the cases of the initial -- skew -- generators as well as those
which correspond to the standard form) under $\varphi^P$ are, by Lemma \ref{conjugationlemma},
polynomial in $k_{ij}$, and are also by construction polynomial in $h$.

Now, the automorphism $\varphi^P$, when acting upon the standard form generators
$$
\lbrace q_1,\ldots, q_n,p_1,\ldots, p_n\rbrace
$$
can be viewed as an automorphism of the $h$-augmented Poisson algebra $P^h_{n,\mathbb{C}[\lbrace
hk_{ij}\rbrace]}$ over the polynomial ring $\mathbb{C}[\lbrace hk_{ij}\rbrace]$. The
$\mathbb{Z}$-grading of this algebra is specified by assigning degree $1$ to the main generators
and degree $0$ to $h$ and all $k_{ij}$. As an automorphism of this Poisson algebra, $\varphi^P$
admits, by an argument virtually identical to the main result of \cite{KGE}, a tame automorphism
(symplectomorphism) sequence converging to it in the power series topology induced by the above
grading. Let us fix such a sequence and denote it by $\lbrace \psi_m\rbrace$.

Every element $\psi_k$ of the tame sequence is such that the images under $\psi_m$ of the
generators $\lbrace q_1,\ldots, q_n,p_1,\ldots, p_n\rbrace$ are polynomial in $h$ and $k_{ij}$.
However, when acting upon the localized skew Poisson algebra generators $\lbrace \xi_1,\ldots,
\xi_{2n}\rbrace$, $\psi_m$ need not be polynomial in $k_{ij}$, and therefore $\psi_m$ cannot be
lifted to automorphisms of the skew Poisson algebra. This is remedied by application of Lemma
\ref{conjugationlemma}: one can find a polynomial $P_1$ in the variables $k_{ij}$, such that the
conjugation of every element $\psi_m$ of the tame sequence with the mapping
$$
(\xi_1,\ldots, \xi_{2n},u,v)\mapsto (P_1\xi_1,\ldots, P_1\xi_{2n},P_1u,P_1v),\;\;h\mapsto P_1^2h
$$
yields an automorphism of the localized skew Poisson algebra polynomial in $k_{ij}$. We then have
the following statement.
\begin{lem}\label{convergencelemma}
The sequence $\lbrace \psi^{P_1}_m\rbrace$ converges to the conjugated automorphism
$(\varphi^P)^{P_1}$ in the power series topology with $\Deg h = \Deg k_{ij} = 0$ as well as in the
power series topology with $\Deg h = 0$, $\Deg k_{ij} = 2$.
\end{lem}
\begin{proof}
The first half of the statement follows from the construction of the tame sequence and from the
observation that, due to the fact that the two coordinate systems are connected by a transformation
that has zero free term, the height of the polynomials $P$ and $P_1$ is at least one.

One then obtains convergence in the power series topology relevant to the singularity trick
(Proposition \ref{skewthetaprop}) from that in the approximation power series topology by noting
that giving a non-zero degree to $k_{ij}$ may only make the consecutive approximations closer to
the limit in the corresponding metric.

\end{proof}

The sequence $\lbrace \psi^{P_1}_m\rbrace$ can, due to its polynomial character with respect to
$k_{ij}$, be thought of as a sequence of tame automorphisms of the skew Poisson algebra
$P_{n+1,\mathbb{C}}^h[k_{ij}]$ over $\mathbb{C}$ converging to $(\varphi^P)^{P_1}$. By Proposition
\ref{skewthetaprop}, the loop morphism $\Phi_s^{hk}$ is continuous and by the theorem of \cite{4}
regarding tame automorphisms (or, to be more precise, by its counterpart for the case of skew Weyl
and Poisson algebras), $\Phi_s^{hk}(\psi^{P_1}_m) = \psi^{P_1}_m$. Therefore,
$$
\Phi_s^{hk}((\varphi^P)^{P_1}) = (\varphi^P)^{P_1}.
$$
Finally, as $(\varphi^P)^{P_1}$ is obtained from $\varphi$ by conjugation with homothety-type
transformations, and as the mappings $\phi_{[p]}$ and their $h$-augmented and skew analogues, being
in essence compositions of restrictions and Frobenius homomorphisms, manifestly preserve such
transformations point-wise, we conclude that
$$
\Phi_s^h(\varphi) = \varphi.
$$
Proposition \ref{mainprop} is proved.

\subsection{Returning to the non-augmented case. Automorphisms Laurent in $h$.}
In order to return to the non-augmented algebras (i.e. to the context of the original Conjecture
\ref{mainconj}), we need to specialize to $h = 1$. The argument is essentially similar to the one
needed in the proof of Conjecture \ref{mainconj} in \cite{K-BE4}, yet the situation is slightly
simpler here as we need not establish convergence (in fact, truncation) of power series in $h$.

We need to show how the fact that $\Phi^h_s$ is the identity map implies that $\Phi_s$, the
original loop morphism (for the Poisson algebra), is also the identity map. To that end, we observe
that the augmentation by $h$ along with the main results of the previous subsection, admit the
following reformulation.

Namely, we notice that we may regard $h$ as scalar coefficients (i.e. coefficients in large enough
a base field) and consider the (Poisson) $\mathbb{C}(h)$-algebra obtained from $P^h_{n,\mathbb{C}}$
by preserving the form of the augmented Poisson bracket. The new ground field $\mathbb{C}(h)$
carries an obvious $h$-adic topology.

The fact that we need $\mathbb{C}(h)$ -- or, to be more precise, the ring $\mathbb{C}[h,h^{-1}]$ of
Laurent polynomials -- rather than the mere polynomial ring $\mathbb{C}[h]$ is dictated by the
necessity to make specialization of $\Aut P^h_{n,\mathbb{C}}$ to $\Aut P_{n,\mathbb{C}}$
surjective. This can be seen as follows. Suppose $\varphi^h$ is an automorphism of the deformed
algebra $P^h_{n,\mathbb{C}}$ which acts as the identity map on $h$. Since it is stable on $h$, it
corresponds to an automorphism of the $\mathbb{C}[h]$-algebra (where $h$ is a parameter and not a
generator) generated by $x_i,\;p_j$ with the Poisson bracket containing $h$. This object, after
appropriate localization maps to an automorphism of the (augmented) Poisson algebra $P^h_n$ with
the ground field $\mathbb{C}(h)$. On the other hand, any automorphism $\varphi$ of
$P_{n,\mathbb{C}}$ can be made into a $\mathbb{C}(h)$-automorphism $\varphi^h$ by introducing a
scalar $h$ and conjugating $\varphi$ with a mapping
$$
x_i' = hx_i,\;\;p_j' = p_j.
$$
The resulting transformation will be an automorphism of the Poisson $\mathbb{C}(h)$-algebra with
the bracket as in the augmented algebra $P^h_n$,\emph{ however in general the images of the
generators under this automorphism will contain negative powers of $h$.  } Its specialization to
$h=1$ returns it to $\varphi$. Therefore, every polynomial symplectomorphism has a pre-image under
specialization of automorphisms rational in the augmentation parameter. The actual Laurent
dependence is due to the fact any $\varphi$ is an object of finite degree, therefore its
conjugation by the transformation above will lead to an object polynomial in $h$ and $h^{-1}$. We
nevertheless refer to these objects as automorphisms over the field $\mathbb{C}(h)$, with the
additional assumption of their polynomial dependence on $h$ and $h^{-1}$.

\smallskip

The conclusion is that Proposition\ref{mainprop} does not immediately imply that the non-augmented
loop morphism $\Phi_s$ is the identity map; rather, the domain of the loop morphism $\Phi^h_s$ needs
to be extended to the points with rational dependency on the augmentation parameter, at which point
the claim $\Phi_s=\Id$ follows from the specialization of the extended loop morphism $\overline{\Phi^h_s}$.

\smallskip

The problem we have encountered is identical to the one resolved in our proof of the isomorphism in
\cite{K-BE4} and will be processed here in a similar manner. Namely, we are going to introduce
auxiliary variables and construct a so-called \emph{twisted} symplectomorphism from a fixed
symplectomorphism which is polynomial in $h$ and $h^{-1}$ (thus, one must verify that the action of
the loop morphism on this $\mathbb{C}(h)$-algebra automorphism is that of the identity map). The
twisted object will be polynomial in $h$ and therefore the action of $\Phi^h_s$ on it will be well
defined; we will obtain the desired equality once we compare the images of the auxiliary variable
under the twisted object and its image under the loop morphism.

\smallskip

Let $\varphi$ be an automorphism of the augmented Poisson $\mathbb{C}(h)$-algebra polynomial in $h$
and $h^{-1}$. We introduce a pair of auxiliary variables, $u$ and $v$, which are extra $x$ and $p$
with respect to the augmented Poisson bracket. Let
$$
\lambda = h^k
$$
be $k$-th power of the augmentation parameter, for large enough $k$. We fix $i$ (the number of the
pair $x_i,\;p_i$) and we define the automorphism
$$
\psi_{\lambda}: u\mapsto u + \lambda x_i,\;\;p_i\mapsto p_i - \lambda v.
$$
As before, we extend $\varphi$ to the new algebra by its identical action on the auxiliary
variables and denote the extended map also by $\varphi$. We now consider the twisted automorphism
$$
\varphi_{\lambda} = \varphi\circ\psi_{\lambda}\circ\varphi^{-1}.
$$
As $k$ can be taken arbitrarily large, the mapping $\varphi_{\lambda}$ will be polynomial in $h$
for all $k>k_0$ (where $k_0$ depends on $\varphi$ as well as $\varphi^{-1}$ but is finite for the
fixed automorphism).

We are now able to apply the loop morphism $\Phi^h_s$ to the twisted automorphism $\varphi_{\lambda}$,
by Proposition \ref{mainprop}, the output must be $\varphi_{\lambda}$ itself. To finalize the proof,
we need to compare the action of $\Phi^h_s(\varphi_{\lambda})$ on $u$ with $\varphi_{\lambda}(u)$,
however in order to do so we must know the form of the image. The next two lemmas address the issue.


\begin{lem}\label{twistedliftcanon}
Suppose $\theta$ is an $h$-augmented polynomial symplectomorphism over $\mathbb{C}$. Denote by
$\lbrace \theta_p\rbrace$ the sequence of characteristic $p$ symplectomorphisms representing its
modulo $[p]$ reduction. For a generic element $p$ in a sequence representing $[p]$, denote the Weyl
generators by $x_1,\ldots, x_n, d_1,\ldots, d_n$ and the corresponding $p$-th powers generating the
center of the Weyl algebra over $\mathbb{F}_p$ by  $\xi_1,\ldots, \xi_n, \eta_1,\ldots, \eta_n$.
Then, for almost all $p$ in $[p]$ (in the sense of the ultrafilter), the image under $\theta_p$ of
every central generator admits a unique pre-image Weyl polynomial $\hat{H}$ with respect to taking
the $p$-th power and pulling back the coefficients by the inverse Frobenius automorphism.
\end{lem}
\begin{proof}
We prove the statement for $H = \theta_p(\xi_i)$ -- the case $\eta_j$ is identical.

Suppose first that
$$
\theta_p(\xi_i) = \xi_i = x_i^p.
$$
Then the Newton polyhedron of the image $\theta_p(\xi_i)$ has only one vertex, therefore -- as
taking the $p$-th power only dilates the Newton polyhedron -- the polynomial $\hat{\theta}_p(x_i)$
must be equal to $x_i$.

The general case uses the isomorphism between the automorphism groups of $h$-augmented algebras,
which implies, given its nature, that modulo $[p]$ reductions of $\theta$ and its lifting
$\hat{\theta}$ are consistent -- that is, for almost all $p$ in $[p]$, the restriction of
$\hat{\theta}_p$ to the center (twisted by the inverse Frobenius acting on the coefficients)
coincides with $\theta_p$. The application is as follows. Suppose
$$
H = \theta_p(\xi_i)
$$
is the image of $\xi_i$.  We know that
$$
H = \Fr_*^{-1}\hat{\theta}_p(x_i^p)
$$
where $\Fr_*^{-1}$ is the action of the inverse Frobenius automorphism on the coefficients of the
polynomial. The last equation is equivalent to
$$
\hat{\theta}_p^{-1}(\Fr_*(H)) = x_i^p.
$$
By the special case above, there exists a unique Weyl polynomial $\hat{G}$ such that
$$
\hat{G}^p = \hat{\theta}_p^{-1}(\Fr_*(H)).
$$
But then
$$
H = \Fr_*^{-1}(\hat{\theta}_p(\hat{G}^p))
$$
which is exactly what we wanted.
\end{proof}

It will be convenient to denote the one-to-one correspondence between modulo $p$ reductions of
central polynomials coming from characteristic zero symplectomorphisms with their Weyl liftings by
$\phi^h_p$ (for this correspondence is, as evidenced by Lemma \ref{twistedliftcanon}, shares
essential nature with the characteristic zero direct homomorphism $\phi^h_{[p]}$).

We now apply the above lemma in order to establish the form of the pre-image Weyl polynomial in the
case of auxiliary variables $u,v$ and the central polynomial of a special type.
\begin{lem}\label{formlemma}
Let $u,v$ denote two extra $h$-Poisson variables, and let
$$
H = u + h^k\varphi(x_i)
$$
be the image of $u$ under the twisted automorphism coming from $\varphi$ as above ($\varphi(x_i)$
is rational in $h$ but $h^k\varphi(x_i)$ is polynomial in $h$). Then the unique pre-image $\hat{H}$
of $H$ with respect to the correspondence $\phi^h_p$ of the previous lemma has the form
$$
\hat{H} = \hat{u} + h^k P_i(x_1,\ldots, d_n,h)
$$
where $P_i$ is rational in $h$.
\end{lem}
\begin{proof}
We establish the statement in several elementary steps. Firstly, as $H$ does not contain the
auxiliary variable $v$, $\hat{H}$ does not contain its Weyl counterpart $\hat{v}$: indeed,
otherwise the Newton polyhedron of $H$ would contain (in the case of $v$ carrying great enough
weight to make the corresponding monomial the highest-order term) a vertex corresponding to the
monomial containing $\hat{v}$. \footnote{Note that $\phi^h_p$ behaves toward the Newton polyhedra
as the homomorphism taking the $p$-th power does.}

Now let
$$
\hat{H} = Q(\hat{u}) + R
$$
where every monomial in $R$ is proportional to generators other than $u$. Then
$$
H = \phi^h_p(\hat{H}) = \phi^h_p(Q) + \phi^h_p(R),
$$
as the two differential operators $Q$ and $R$ commute with each other and therefore taking the
$p$-th power is executed as in the commutative case. By Lemma \ref{twistedliftcanon}, we must have
$$
Q(\hat{u}) = \hat{u}.
$$

Finally, we show that if $\hat{H}$ contains monomials which are products of $\hat{u}$ with other
generators, then  $\phi^h_p(\hat{H})\neq H$.  Indeed, if such a monomial had a non-zero coefficient
in  $\hat{H}$, then there would exist a grading under which this monomial would be the
highest-order term (corresponding to a vertex in the Newton polyhedron). Then the image
$\phi^h_p(\hat{H})$ would also have a monomial corresponding to this highest-order term with
non-zero coefficient, as taking the $p$-th power dilates the polyhedron and therefore maps the
extremal points to extremal points.

The conclusion is that the polynomial $\hat{H}$ has the form
$$
\hat{u} + \tilde{P}_i(x_1,\ldots, d_n,h).
$$
Taking out $h^k$ from $\tilde{P}_i$ leaves us with the form we needed.
\end{proof}

The two lemmas provide a canonical way to extend the $h$-augmented symplectomorphism lifting map to points
polynomial in $h$ and $h^{-1}$ -- the expressions $P_i(x_1,\ldots, d_n,h)$ (and their analogues for $p_j$)
are taken to be the images of the lifted automorphism, at which point the commutation relations are verified as in \cite{K-BE4}.
As the polynomials $P_i(x_1,\ldots, d_n,h)$ define the lifting and are therefore tied to the extension of the Kontsevich isomorphism
$\phi^h_{[p]}$ to $h$-Laurent points, we conclude that the action of the loop morphism $\Phi^h_s$ on $\varphi_{\lambda}$ yields
$$
\Phi^h_s(\varphi_{\lambda})(u) = u + \lambda \overline{\Phi^h_s}(\varphi),
$$
where $\overline{\Phi^h_s}(\varphi)$ is the desired extension of the loop morphism to points Laurent-polynomial in $h$.

The last equation, together with Proposition \ref{mainprop}, implies that
$$
u + \lambda\overline{\Phi^h_s}(\varphi)(x_i) = u + \lambda \varphi(x_i),
$$
from which it follows that the action of $\varphi$ and $\overline{\Phi^h_s}(\varphi)$ is identical
on each generator $x_i$. The other group of generators ($p_i$) is processed analogously.

We have thus proved the following result.
\begin{prop}\label{mainpropbar}
$\overline{\Phi^h_s}$ is the identity map.
\end{prop}

The Main Theorem is an easy consequence of Proposition \ref{mainpropbar}, as the specialization of
$\overline{\Phi^h_s}$ is $\Phi_s$, and every point in the domain of $\Phi_s$ has a pre-image under
the specialization.

\begin{remark}\label{remnagata}
This form of specialization argument, particularly the switching from automorphisms preserving $h$
to automorphisms of $\mathbb{C}(h)$-algebras is similar to the method used by Nagata \cite{Nagata}
in the construction of his famous example (also cf. \cite{Shes1, KBYu_N1, KBYu_N2}).
\end{remark}

\begin{remark}\label{simplerproof}
There is a slightly shorter justification of Proposition \ref{mainpropbar}. Namely, from the extension of the lifting map in \cite{K-BE4}
to points polynomial in $h$, $h^{-1}$, where Lemmas \ref{twistedliftcanon} and \ref{formlemma} are essential and have first appeared, it follows that $\phi^h_{[p]}$ extends to an isomorphism over
$\mathbb{C}[h,h^{-1}]$. Thus (as before) the loop morphism is well defined for such objects. However now we know that the $\overline{\Phi^h_s}$ is a group homomorphism; therefore, when starting with $\varphi$
and constructing the twisted automorphism, we may immediately write
$$
\Phi^h_s(\varphi\circ\psi_{\lambda}\circ\varphi^{-1}) = \overline{\Phi^h_s}(\varphi)\circ \psi_{\lambda}\circ \overline{\Phi^h_s}(\varphi)^{-1}
$$
after which the evaluation of the image of $u$ becomes a trivial affair.
\end{remark}

\section{Discussion}

As we have seen, the introduction of skew Poisson and Weyl commutator structure augmentation, which
eventually leads to a stronger form of tame approximation proved via the singularity trick, with
further specialization of augmentation parameters, allows one to arrive at the Main Theorem.

\smallskip

The introduction of the deformation (augmentation) parameter $h$ and the skew form $[k_{ij}]$
allows in fact to construct a well-behaved lifting map from (skew augmented) symplectomorphisms to
automorphisms of (completion of) the skew Weyl algebra, which is the inverse to the direct
homomorphism $\phi^h_{[p]}$. This is the framework for the proof of Conjecture \ref{mainconj},
conducted in detail in \cite{K-BE4}. The specialization to $h=1$ is achieved by first showing that
it is correct in an irreducible component and then by using the homotopy argument as demonstrated
in the main text. The added complexity lies in the issue of power series convergence, which means
that the lifting map has to be continued to the entire domain. This complication does not occur
here as Conjecture \ref{mainconj} has been assumed.
\smallskip

The nice behavior of the skew version of the homomorphism $\phi_{[p]}$ under approximation seems to
be a rather potent result. It is a question of legitimate interest whether the endomorphism
counterpart of this map is algebraic. The algebraic nature in the automorphism case was established
via Gabber's idea, utilizing local nilpotency of derivations corresponding to the adjoint action by
generators. It may be possible to modify this attack to arrive at the endomorphism case.
Furthermore, if one abandons the loop morphism point of view in favor of comparing, for various
infinite primes, the action of $\phi_{[p]}$ and its augmented and skew analogues, one can -- as we
plan to show in a separate paper -- effectively circumvent certain issues pertaining to regularity
and approximation and mount a successful attack on the (augmented) endomorphism case of the
independence problem.

The non-augmented endomorphism counterpart to the Main Theorem seems to require the following
rather curious Conjecture \ref{irrconj}.

\begin{conj}\label{irrconj}
Let $\End W_{n,\mathbb{C}}$ denote the $\Ind$-scheme of endomorphisms of the Weyl algebra, which is
the direct limit of the system of schemes $\End^{\leq N}$. Then every irreducible component of each
scheme $\End^{\leq N}$ is embedded into an irreducible component of $\End^{\leq M}$ ($M\geq N$)
which contains the point corresponding to the identity automorphism.
\end{conj}
Its resolution would constitute a significant advancement in our understanding of the geometry and
topology underlying endomorphism $\Ind$-schemes.

\smallskip

Finally, as it has been outlined in the overview of main results, the proof strategy (the
augmentation, tame augmented approximation, and specialization of augmentation parameters) is not
sensitive to the base field (as long as the latter has characteristic zero). In the case of base
field $\mathbb{Q}$, thanks to Proposition \ref{skewthetaprop}, limits of the image of convergent
sequences of tame (skew augmented Weyl algebra) automorphisms under the tame isomorphism -- which
is manifestly independent of infinite prime -- exist and depend only on the limits of tame
sequences. The conclusion is that the results of \cite{4}, together with the constructions
presented here as well as in \cite{K-BE4}, eventually yield the proof of the general Kontsevich
conjecture (Conjecture \ref{mainconjgen}), as well as the independence of infinite prime over
arbitrary field of characteristic zero.

\end{document}